\newtheorem{theorem}{Theorem}[section]
\newtheorem{proposition}[theorem]{Proposition} 
\newtheorem{corollary}[theorem]{Corollary} 
\newtheorem{fact}[theorem]{Fact} 
\theoremstyle{definition}
\newtheorem{definition}[theorem]{Definition}
\newtheorem{example}[theorem]{Example}
\theoremstyle{remark}
\newtheorem{remark}[theorem]{Remark}
\DeclareMathOperator{\cf}{cf}
\DeclareMathOperator{\CAP}{CAP}
\begin{document}
\noindent                                             
\begin{picture}(150,36)                               
\put(5,20){\tiny{Submitted to}}                       
\put(5,7){\textbf{Topology Proceedings}}              
\put(0,0){\framebox(140,34){}}                        
\put(2,2){\framebox(136,30){}}                        
\end{picture}                                         

\vspace{0.5in}

\title [pseudocompactness, ultrafilter convergence]
{Some compactness properties related to pseudocompactness
and ultrafilter convergence}

\author[]{Paolo Lipparini} 
\address{Partimento di Matematica\\
Viale della Ricerca Scientifica\\
II Universit\`a di Roma (Tor Vergata)\\
I-00133 ROME 
ITALY}
\urladdr{http://www.mat.uniroma2.it/\textasciitilde lipparin}
\thanks{The author has received support from MPI and GNSAGA.
We wish to express our gratitude to X. Caicedo
and S. Garcia-Ferreira for stimulating discussions and correspondence} 

\subjclass[2000]{Primary 54A20, 54D20;
Secondary 54B10}
\keywords{Pseudocompactness, $D$-pseudocompactness, ultrafilter convergence, $D$-limit point, complete accumulation point, $\CAP_ \lambda $, $ [\mu, \lambda ] $-compactness, productivity of compactness, family of subsets of a topological space}

\begin{abstract} 
We discuss some notions of compactness and convergence 
relative to a specified
 family $\mathcal F$ 
of subsets of some topological space $X$. 
The two most interesting particular cases
of our construction appear to be the following ones.

(1) The case in which $\mathcal F$ 
is the family of all singletons of $X$, in which case we get back the more usual notions.

(2) The case in which $\mathcal F$ 
is the family of all nonempty open subsets of $X$,
in which case we get notions related to pseudocompactness.

A large part of the results in this note are known in particular case (1);
 the results are, in general, new in case (2). As an  example, we   characterize
 those spaces which are $D$-pseudocompact, for
some ultrafilter $D$ uniform over $\lambda$.
\end{abstract}

\maketitle  

\section{Introduction} \label{intronuo} 

In this note we study various compactness and convergence
properties relative to a family $\mathcal F$ of subsets of some topological space.
In particular, we relativize 
to $\mathcal F$ the notions
of $D$-compactness, $\CAP_ \lambda $, and 
$ [\mu, \lambda ] $-compactness.
The two particular cases which motivate our treatment are when $\mathcal F$ is either (1) the family of all singletons of $X$, or (2) the family of all nonempty open sets of $X$. As far as case (2) is concerned, we can equivalently consider 
nonempty elements of some base, and we can also equivalently consider
those
sets which are the closure of some nonempty open set.

Our results concern the mutual relationship among the above compactness 
properties, and their behavior with respect to products.
Some results which are known in 
particular case (1) are generalized to the case of an arbitrary family
$\mathcal F$. Apparently, a few results are new even in particular case (1). 

Already in particular case (2),  our results appear to be new.
For example, we get characterizations
of those spaces which are $D$-pseudocompact, for
some ultrafilter $D$ uniform over $\lambda$ (Corollary \ref{pseudofprod}).
 
Similarly, we get equivalent conditions for the weaker 
local form   asserting that,
for every $ \lambda $-indexed family of nonempty sets of $X$,
there exists some uniform ultrafilter $D$ over $\lambda$ such that the family has some $D$-limit
point in $X$ (Theorem \ref{equivcpn}). In the particular
case $\lambda= \omega $, we get nothing but more conditions
equivalent to pseudocompactness (for Tychonoff spaces).

At first reading, the reader might
consider only the above particular cases (1) and (2), and look at this note as
a generalization to pseudocompactness-like notions of results
already known about ultrafilter convergence and complete accumulation
points.
Of course, it might be the case that
our  definitions and results can be applied to other situations, apart from the two mentioned particular ones; however,
 we have not worked details yet.

\smallskip

No separation axiom is assumed, unless
explicitly mentioned.

\subsection{Some history and our main aim} \label{intro} 

The notion of (pointwise) ultrafilter convergence has proven particularly
useful in topology, especially in connection with the study of compactness
properties and existence of complete accumulation points, not excluding many other kinds of applications.
In particular, ultrafilter convergence is an essential tool in studying compactness properties
of products. In a sense made precise in \cite{topproc},  the use of ultrafilters
is unavoidable in this situation.

Ginsburg and Sack's 1975 paper \cite{GS} is a pioneering work in applications
of pointwise ultrafilter convergence. In addition, \cite{GS} introduces a fundamental
new tool, the idea of considering ultrafilter limits of subsets (rather than points)
of a topological space. In particular, taking into consideration
ultrafilter limits of nonempty open sets provides
 deep applications to pseudocompactness, as well as
 the possibility of introducing further pseudocompactness-like notions.
Some analogies, as well as some differences between the two cases were already discussed in \cite{GS}. Subsequently,
\cite{GF1} analyzed in more details some analogies. 

Ginsburg and Sack's work concentrated on ultrafilters uniform over $ \omega $.
Generalizations and improvements for ultrafilters over larger cardinals appeared, for example, 
in  \cite{Sa} in the case of pointwise $D$-convergence,
and in \cite{GF} in the case of $D$-pseudocompactness.

A new wave of results, partially inspired by seemingly unrelated 
 problems in Mathematical Logic, arose when Caicedo \cite{Cprepr, C},
using ultrafilters, proved some two-cardinals transfer results for compactness  of products. For example, among many other things, Caicedo proved that 
if all powers of some topological space $X$ are
$ [ \lambda ^+, \lambda ^+] $-compact, then 
all   powers of $X$ are $ [ \lambda , \lambda ] $-compact.
Subsequently, further results along this line appeared in \cite{topproc,topappl,nuotop}. 

The aim of this note is twofold. First, we provide analogues,
 for pseudocompactness-like notions, of results previously
proved only for pointwise convergence; in particular, we provide versions of 
many results
appeared in \cite{Cprepr, C, topproc,topappl}.
 
Our second aim is to insert the two above-mentioned kinds of results
into a more general framework. 
Apart from the advantage of a unified treatment
of both cases, we hope that this abstract approach will contribute to put in a clearer
light the methods and notions used in the more familiar case of
pointwise convergence. Moreover, as we mentioned, 
\cite{GS} noticed certain analogies between the two cases,
but noticed also that  there are asymmetries.
In our opinion, our treatment provides a very neat explanation
 for such asymmetries. See the   discussion below in subsection
\ref{syn} 
relative to
Section \ref{behprod}.

Finally, let us mention that,   for particular case (1), a large part
 of the results presented here is well known; however,
even in this particular and well studied case,
we provide a couple of results which might be new: see, e. g.,
Propositions \ref{singreg} and \ref{singreg2}, and
 Remark \ref{res}.

\subsection{Synopsis} \label{syn} 

In detail, the paper is divided as follows.

In Section \ref{dcomp} we introduce the notion of
$D$-compactness relative to some family $\mathcal F$ of subsets
of some topological space $X$. This provides a common generalization
both of pointwise $D$-compactness, and of $D$-pseudocompactness as introduced by \cite{GS,GF}.
Some trivial facts hold about this notion: for example, we can equivalently consider
the family of all the closures of elements of $\mathcal F$.

In Section \ref{capfs} we discuss the notion
of  a complete accumulation point relative to $\mathcal F$. 
In fact, two version are presented: the first one, starred, dealing with 
\emph{sequences} of subsets, and the second one, unstarred, dealing with 
\emph{sets} of subsets. That is, in the starred case repetitions are allowed, while 
they are not allowed
in the unstarred case.
The difference between the two cases is essential only when dealing
with singular cardinals (Proposition \ref{singreg}).
In the classical case when $\mathcal F$ is the set of all singletons, the unstarred notion is  most used 
in the literature; however, we show that
the exact connection between 
the notion of a $D$-limit point and the existence of a complete accumulation point   holds
only for the starred variant (Proposition \ref{ufacc}).

In
Section \ref{rel}
we introduce a generalization of 
$[\mu, \lambda ]$-compactness which also depends on $\mathcal F$, and
in Theorem \ref{equivcpn}  we prove the equivalence 
among  many of the $\mathcal F$-dependent notions we have defined
before.

Section \ref{behprod} discusses 
the behavior of the above notions in connection with (Tychonoff)  products.
Actually, for sake of simplicity only, we mostly deal with powers.
Since, in our notions, a topological space $X$ comes equipped with a
family $\mathcal F$ of subsets  
attached to it, we have to specify which family should be
attached to the power $X^ \delta $. In order to get
significant results, the right choice is to attach to
$X^ \delta $ the family $\mathcal F ^ \delta $ consisting of all products of
$ \delta $ members of $\mathcal F$ (some variations are possible). In the case when
$\mathcal F$ is the family of all the singletons of $X$, then 
 $\mathcal F ^ \delta $ turns out to be  the family of all singletons of $X^ \delta $ again,
thus we get back the classical results about ultrafilter convergence in 
products. On the other hand, when $\mathcal F$ is the family of all nonempty subsets of $X$, then $\mathcal F ^ \delta $, in  general, contains certain sets which are not open 
in $X^ \delta $; in fact, $\mathcal F ^ \delta $ is a base for the box topology on 
$X^ \delta $, a topology generally strictly finer than the nowadays standardly used Tychonoff
topology. 

The above fact explains the reason why, in the case
of products, there is not a total symmetry
between results on compactness and results about
pseudocompactness. For example, as already noticed in 
\cite{GS}, it is true that all powers of some topological space $X$ 
are countably compact if and only if $X$ is $D$-compact,
for some ultrafilter $D$ uniform over $ \omega $.
On the other hand, \cite{GS} constructed a topological space $X$ 
all whose powers are pseudocompact, but for which there exists no  ultrafilter 
$D$ uniform  over $ \omega$ such that $X$ is $D$-pseudocompact.
Our framework not only explains the reason for this asymmetry,
but can be used in order to provide a characterization of 
$D$-pseudocompact spaces, a characterization parallel to that
of $D$-compact spaces.
Indeed, we do find  versions 
for $D$-pseudocompactness of the classical results about $D$-convergence (Corollary \ref{pseudofprod}).
Though statements become a little more involved, we believe that
these results have some intrinsic interest.

In Section \ref{sectr} we show that cardinal transfer results for 
decomposable ultrafilters
deeply affect compactness properties relative to these cardinals.
More exactly, if $\lambda$ and $\mu$ are cardinals
such that every uniform ultrafilter
over $\lambda$ is $\mu$-decomposable, then every 
 topological space $X$ which is 
$\mathcal F$-$D$-compact, for some
ultrafilter $D$ uniform over $\lambda$, is also
$\mathcal F$-$D'$-compact, for some
ultrafilter $D'$ uniform over $\mu$.
Of course, this  result applies also to all the equivalent
notions discussed in the preceding sections.
Since there are highly nontrivial set theoretical results 
on transfer of ultrafilter decomposability, our theorems
provide deep unexpected applications of Set Theory 
to compactness properties of products.
The results in Section \ref{sectr}
 generalize some results appeared in \cite{topproc}.

Finally, in Section \ref{mlrelf} we discuss still another
generalization of $[ \lambda , \mu ]$-compactness.
Again, there are relationships with the other compactness properties
introduced before, as well as with 
further variations on pseudocompactness.
The notions introduced in Section \ref{mlrelf} are probably worth of further study.

\section{$D$-compactness relative to some family $\mathcal F$}\label{dcomp}

Suppose that  $D$ is an ultrafilter over some set $Z$, and
$X$ is a topological space.

A family $(x_z)_{z \in Z}$ 
of (not necessarily distinct) elements of $X$ is said to 
$D$-\emph{converge}  to some point $x \in X$ if and only if
$ \{ z \in Z \mid  x_z \in U\} \in D$, for every neighborhood
$U$ of $x$ in $X$.

The space $X$ is said to be $D$-compact if and only if every family
$(x_z)_{z \in Z}$ 
of elements of $X$
converges to some point of $X$.

If
$(Y_z)_{z \in Z}$ is a family 
of (not necessarily distinct) subsets of $X$, then $x$ is called a $D$-\emph{limit point} of $(Y_z)_{z \in Z}$
if and only if 
$ \{ z \in Z \mid  Y_z \cap U \not= \emptyset \} \in D$, for every neighborhood
$U$ of $x$ in $X$.

Since $Y_ z \cap U \not= \emptyset $ if and only if  
$ \overline{Y}_ z \cap U \not= \emptyset $,
we have that 
$x$ is a $D$-limit point of $(Y_z)_{z \in Z}$
if and only if 
$x$ is a $D$-limit point of $( \overline{ Y}_z)_{z \in Z}$.

The space $X$ is said to be $D$-\emph{pseudocompact} 
 if and only if every family
$(O_z)_{z \in Z}$ 
of nonempty open subsets of $X$
has some $D$-limit point in $X$.
The above notion is due to \cite[Definition 4.1]{GS}
for non-principal ultrafilters over $ \omega $,
and appears in \cite{GF} for uniform ultrafilters over arbitrary cardinals. 

The above notions can be simultaneously generalized as follows.

\begin{definition}\label{fcomp}
Suppose that  $D$ is an ultrafilter over some set $Z$, 
$X$ is a topological space, and $\mathcal F$
is a specified family of subsets of $X$.

We say that the space $X$ is  $\mathcal F$-$D$-\emph{compact} if and only if every family
$(F_z)_{z \in Z}$ 
of members of $\mathcal F$ 
has some $D$-limit point in $X$.

Thus, we get the notion of 
$D$-compactness in the particular case when $\mathcal F$ is
the family of all singletons of $X$; and we get
the notion of 
$D$-pseudocompactness in the particular case when $\mathcal F$ is
the family of all nonempty open subsets of $X$.

If $\mathcal G$ is another family of subsets of $X$, let us write
$\mathcal F \rhd \mathcal G$ to mean that,
 for every $F \in \mathcal F$, there is $G \in \mathcal G$
such that $F \supseteq G$.

With this notation, it is trivial to show that
if $\mathcal F \rhd \mathcal G$ and 
$X$ is  $\mathcal G$-$D$-compact,
then $X$ is  $\mathcal F$-$D$-compact.

If $\mathcal F$ is a family of subsets of $X$,
let $ \overline{  \mathcal F} = \{ \overline{F} \mid \ F \in \mathcal F\} $
be the set of all closures of elements of $\mathcal F$.
With this notation, it is trivial to show that
$X$ is  $\mathcal F$-$D$-compact
if and only if $X$ is  $ \overline{\mathcal F}$-$D$-compact.
\end{definition}

The most interesting cases in Definition \ref{fcomp} appear to be the two 
mentioned ones, that is, when either  $\mathcal F$ is the set of all singletons
of $X$, or $\mathcal F$ is the set of all nonempty open subsets of $X$.

In the particular case when  $\mathcal F$ is the set of all singletons,
most of the  results we prove here are essentially  known, except for the 
technical difference that we deal with sequences, rather than subsets 
The difference is substantial only when dealing with
singular cardinals. See Remark \ref{seqversubs} and 
Proposition  \ref{singreg}.
 
In the case when $\mathcal F$ is the set of all nonempty open subsets of $X$, 
most of our results appear to be new. 

\begin{remark} \label{opbase} 
Notice that if $X$ is a topological space,
$\mathcal F$ is the set of all nonempty open subsets of $X$,
and $\mathcal B$ is a base (consisting of nonempty sets)
 for the topology on $X$, then 
both   $\mathcal F \rhd \mathcal B$ and
$\mathcal B \rhd \mathcal F$.

Hence, 
 $\mathcal F$-$D$-compactness
is the same as $\mathcal B$-$D$-compactness.
A similar remark applies to all compactness properties we shall introduce later
(except for those introduced in Section \ref{mlrelf}).
\end{remark}

\section{Complete accumulation points relative to $\mathcal F$} \label{capfs} 

We are now going to generalize  the notion of an accumulation point.

\begin{definition} \label{capf}
If $ \lambda $ is an infinite cardinal, and 
$ (Y _ \alpha ) _{ \alpha \in \lambda } $ is a sequence of subsets of 
some topological space $X$, we say that  $x \in X$ is a
$ \lambda $-\emph{complete accumulation point}  
of
$ (Y _ \alpha ) _{ \alpha \in \lambda } $ 
if and only if 
$ | \{ \alpha \in \lambda \mid Y_ \alpha \cap U \not= \emptyset  \} | = \lambda $,
for every  neighborhood $U$ of $x$ in $X$.  

In case $\lambda= \omega $, we get the usual notion of a \emph{cluster point}.   

Notice that $x $ is a
$ \lambda $-complete accumulation point  
of
$ (Y _ \alpha ) _{ \alpha \in \lambda } $ 
if and only if 
$x $ is a
$ \lambda $-complete accumulation point  
of
$ ( \overline{Y} _ \alpha ) _{ \alpha \in \lambda } $.

If $\mathcal F$
is a family of subsets of $X$, we say that 
$X$ satisfies $\mathcal F$-$\CAP^* _ \lambda $
if and only if 
every sequence
$ (F _ \alpha ) _{ \alpha \in \lambda } $
of members of $\mathcal F$ has a
$ \lambda $-complete accumulation point.  
 \end{definition}  

Notice that if $X$ is a Tychonoff space,
 and $\mathcal F$ is the family of all nonempty open sets
of $X$, then 
a result by Glicksberg 
\cite{Gl},
when reformulated in the present terminology,
asserts that
$\mathcal F$-$\CAP^* _ \omega  $
is equivalent to pseudocompactness.
See also,  e. g., \cite[Section 4]{GS}, \cite{GF, St}.

If $\mathcal F \rhd \mathcal G$ and 
$X$ satisfies $\mathcal G$-$\CAP^* _ \lambda $, then
$X$ satisfies $\mathcal F$-$\CAP^* _ \lambda $.

Moreover, $X$ satisfies $\mathcal F$-$\CAP^* _ \lambda $ if and only if
it satisfies $ \overline{\mathcal F}$-$\CAP^* _ \lambda $.

\begin{remark} \label{seqversubs}
In the case when each $ Y _ \alpha $ is a singleton
in Definition \ref{capf}, and 
all such singletons are distinct, we get back the usual notion of a
complete accumulation point. 

A point $x \in X$ is said to be a   \emph{complete accumulation point}
of some infinite subset $Y \subseteq X$ if and only if $|Y \cap U|=|Y|$,
for every neighborhood $U$ of $x$ in $X$.

A topological space $X$ satisfies $\CAP_ \lambda $
if and only if every subset $Y \subseteq X$ with $|Y|= \lambda $ 
has a complete accumulation point.   

In the case when $ \lambda $ is a singular cardinal, there is some difference
between the classic notion of a complete accumulation point
and the notion of a $ \lambda $-complete accumulation point,
as  introduced in Definition \ref{capf}.
This happens because, for our purposes, it is more convenient to deal with sequences, rather than
subsets, that is, we allow repetitions. 
This is the reason for the $^*$ in $\mathcal F$-$\CAP^* _ \lambda $
in Definition \ref{capf}.

As pointed in \cite[Part VI, Proposition 1]{nuotop}, if $\mathcal F$ 
is the family of all singletons,  then, for $\lambda$ regular, 
$\mathcal F$-$\CAP^* _ \lambda $ is equivalent to $\CAP _ \lambda $,
and, for $\lambda$ singular, 
$\mathcal F$-$\CAP^* _ \lambda $ is equivalent to the conjunction of 
$\CAP _ \lambda $ and $\CAP _{ \cf\lambda}  $.  

In fact, a more general result holds for families of nonempty sets.
In order to clarify the situation let us introduce the following unstarred variant
of $\mathcal F$-$\CAP ^*_ \lambda $.
If $\mathcal F$
is a family of subsets of $X$, we say that 
$X$ satisfies $\mathcal F$-$\CAP _ \lambda $
if and only if 
every family 
$ (F _ \alpha ) _{ \alpha \in \lambda } $
of \emph{distinct} members of $\mathcal F$ has a
$ \lambda $-complete accumulation point.  
 
Then we have:
\end{remark}   

\begin{proposition} \label{singreg} 
Suppose that 
$X$ is a topological space, and $\mathcal F$
is a  family of   nonempty subsets of $X$.

(a) If  $\lambda$  is a regular cardinal, then $X$ satisfies $\mathcal F$-$\CAP^* _ \lambda $ if and only if 
$X$ satisfies $\mathcal F$-$\CAP_ \lambda $.

(b) If $\lambda$ is a singular cardinal, then $X$ satisfies $\mathcal F$-$\CAP^* _ \lambda $ if and only if 
$X$ satisfies both $\mathcal F$-$\CAP_{ \lambda }$ and $\mathcal F$-$\CAP_{ \cf \lambda }$.
 \end{proposition}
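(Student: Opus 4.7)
The plan is to analyze any sequence $(F_\alpha)_{\alpha\in\lambda}$ of members of $\mathcal F$ via its set of distinct values $\mathcal F'=\{F_\alpha:\alpha\in\lambda\}$ and the fibers $A_F=\{\alpha\in\lambda:F_\alpha=F\}$ for $F\in\mathcal F'$. A trivial observation, used throughout, is that as soon as some fiber $A_F$ has cardinality $\lambda$, any point $x\in F$ (which exists since $F$ is nonempty) is automatically a $\lambda$-complete accumulation point of the sequence, because every neighborhood of $x$ meets $F$ and hence meets $F_\alpha$ for all $\alpha\in A_F$. For part (a), the implication $\mathcal F$-$\CAP^*_\lambda\Rightarrow\mathcal F$-$\CAP_\lambda$ is immediate, and for the converse I would note that either $|\mathcal F'|=\lambda$, so a distinct $\lambda$-subfamily of $(F_\alpha)$ can be extracted and $\mathcal F$-$\CAP_\lambda$ applied (yielding an accumulation point of the whole sequence), or $|\mathcal F'|<\lambda$, in which case regularity of $\lambda$ forces some fiber to have size $\lambda$ and the trivial observation finishes.

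For the forward direction of part (b), $\mathcal F$-$\CAP^*_\lambda\Rightarrow\mathcal F$-$\CAP_\lambda$ is again direct. For $\mathcal F$-$\CAP^*_\lambda\Rightarrow\mathcal F$-$\CAP_{\cf\lambda}$, I would fix once and for all a strictly increasing cofinal sequence $(\lambda_\beta)_{\beta<\cf\lambda}$ in $\lambda$ and a partition $\lambda=\bigcup_{\beta<\cf\lambda}B_\beta$ with $|B_\beta|=\lambda_\beta$; then, given a distinct family $(G_\beta)_{\beta<\cf\lambda}$ of members of $\mathcal F$, stretch it to a $\lambda$-sequence $(F_\alpha)_{\alpha\in\lambda}$ by setting $F_\alpha=G_\beta$ whenever $\alpha\in B_\beta$, and apply $\mathcal F$-$\CAP^*_\lambda$ to obtain a point $x$. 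For any neighborhood $U$ of $x$, the set $I=\{\beta<\cf\lambda:G_\beta\cap U\not=\emptyset\}$ satisfies $|\bigcup_{\beta\in I}B_\beta|=\lambda$; if $|I|<\cf\lambda$, regularity of $\cf\lambda$ would bound $I$ in $\cf\lambda$ and a short cardinal computation would give $|\bigcup_{\beta\in I}B_\beta|<\lambda$, a contradiction, so $|I|=\cf\lambda$ as needed.

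For the converse of part (b), fix $(F_\alpha)_{\alpha\in\lambda}$ and split into three cases: (i) some $|A_F|=\lambda$, finished by the trivial observation; (ii) every $|A_F|<\lambda$ but $|\mathcal F'|=\lambda$, finished by extracting a distinct $\lambda$-subfamily and applying $\mathcal F$-$\CAP_\lambda$; and (iii) $|\mathcal F'|<\lambda$ with every $|A_F|<\lambda$. In case (iii), I would first deduce from $\sum_{F\in\mathcal F'}|A_F|=\lambda$, via a routine cardinal arithmetic argument, that $\sup_{F\in\mathcal F'}|A_F|=\lambda$ and $|\mathcal F'|\ge\cf\lambda$, which allows me to pick distinct $F_\beta\in\mathcal F'$, $\beta<\cf\lambda$, whose multiplicities $|A_{F_\beta}|$ form a strictly increasing cofinal sequence in $\lambda$. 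Applying $\mathcal F$-$\CAP_{\cf\lambda}$ to this distinct family yields $x$ such that, for every neighborhood $U$ of $x$, the set $I=\{\beta<\cf\lambda:F_\beta\cap U\not=\emptyset\}$ has cardinality $\cf\lambda$ and is therefore unbounded in $\cf\lambda$; strict monotonicity of $(|A_{F_\beta}|)_\beta$ then yields $\sup_{\beta\in I}|A_{F_\beta}|=\lambda$, so $|\{\alpha\in\lambda:F_\alpha\cap U\not=\emptyset\}|\ge|\bigcup_{\beta\in I}A_{F_\beta}|=\lambda$. The main obstacle is precisely this case (iii): one must arrange for the multiplicities to be both strictly increasing and cofinal in $\lambda$, so that any $\cf\lambda$-sized subset of indices still witnesses cofinality; the cardinal arithmetic underlying this choice (and the exclusion $|\mathcal F'|<\cf\lambda$) is the technically most delicate point of the proof.
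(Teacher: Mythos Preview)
Your proof is correct and follows essentially the same route as the paper: the same trivial fiber observation, the same stretching trick for $\mathcal F$-$\CAP^*_\lambda\Rightarrow\mathcal F$-$\CAP_{\cf\lambda}$, and the same three-case split for the converse in (b). Your treatment of case (iii) is in fact more explicit than the paper's: where the paper simply asserts that one can choose $\cf\lambda$-many distinct $F_\beta$'s with large multiplicities and that the resulting $\cf\lambda$-accumulation point is a $\lambda$-accumulation point, you spell out the underlying cardinal arithmetic (that $\sup_F|A_F|=\lambda$, hence a recursive choice with strictly increasing cofinal multiplicities is available) and the reason why any $\cf\lambda$-sized set of indices still yields $\lambda$-many hits.
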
  

\begin{proof}
It is obvious that $\mathcal F$-$\CAP^* _ \lambda $ implies
$\mathcal F$-$\CAP_ \lambda $, for every cardinal $\lambda$.

Suppose that $\lambda$ is regular,  that $\mathcal F$-$\CAP_ \lambda $
holds, and that $ (F _ \alpha ) _{ \alpha \in \lambda } $
is a sequence of elements of $\mathcal F$. If some subsequence 
consists
 of $\lambda$-many distinct elements, then, by
 $\mathcal F$-$\CAP_ \lambda $, this subsequence has
 some  $ \lambda $-complete accumulation point
which necessarily is also a $ \lambda $-complete accumulation point
for $ (F _ \alpha ) _{ \alpha \in \lambda } $.
 Otherwise, since $\lambda$ is regular, there exists some $F \in \mathcal F$
which appears $\lambda$-many times in $ (F _ \alpha ) _{ \alpha \in \lambda } $.
Since, by assumption, $F$ is nonempty, just take some $x \in F$ to get a
$ \lambda $-complete accumulation point for
$ (F _ \alpha ) _{ \alpha \in \lambda } $.
Thus we have proved that 
$\mathcal F$-$\CAP_ \lambda $
implies  
$\mathcal F$-$\CAP^* _ \lambda $,
for $\lambda$ regular.

Now
suppose that $\lambda$ is singular and that both $\mathcal F$-$\CAP_ \lambda $
and 
$\mathcal F$-$\CAP _{ \cf \lambda }$
hold.
We are going to show that $\mathcal F$-$\CAP^* _ \lambda $ holds.
Let $ (F _ \alpha ) _{ \alpha \in \lambda } $
be a sequence of elements of $\mathcal F$.
There are three cases.
(i) There exists some $F \in \mathcal F$
which appears $\lambda$-many times in $ (F _ \alpha ) _{ \alpha \in \lambda } $.
In this case, as above, it is enough to choose some element from $F$. 
(ii) Some subsequence of $ (F _ \alpha ) _{ \alpha \in \lambda } $
consists
 of $\lambda$-many distinct elements. Then, as above,
apply $\mathcal F$-$\CAP_ \lambda $ to this subsequence.
(iii) Otherwise, $ (F _ \alpha ) _{ \alpha \in \lambda } $ consists of 
$< \lambda $ different elements, each one appearing $< \lambda $ times.
Moreover, if $ (\lambda _{ \beta }) _{ \beta \in \cf \lambda } $ is a sequence of cardinals $< \lambda $ whose supremum is $\lambda$, then, for every $ \beta \in \cf \lambda $,
there is $F_ \beta \in \mathcal F$ appearing at least  $\lambda _ \beta $-many times.
Since, for each $ \beta $, $F_ \beta $ appears $< \lambda $ times, we can choose 
$ cf \lambda $-many distinct $F_ \beta $'s as above. Applying 
$\mathcal F$-$\CAP _{ \cf \lambda }$ to those  $F_ \beta $'s, 
we get a $ \lambda $-complete accumulation point
for $ (F _ \alpha ) _{ \alpha \in \lambda } $.

It remains to show that
$\mathcal F$-$\CAP^* _ \lambda $ implies
$\mathcal F$-$\CAP _{ \cf \lambda }$.
Let $ (\lambda _{ \beta }) _{ \beta \in \cf \lambda } $ be a sequence of cardinals $< \lambda $ whose supremum is $\lambda$. 
If
$ (F _ \beta  ) _{ \beta  \in \cf\lambda } $
is a sequence of distinct members of $\mathcal F$,
let 
$ (G _ \alpha ) _{ \alpha \in \lambda } $
be a sequence defined in such a way that, for every $ \beta \in \cf \lambda $,
$G_ \alpha =F _ \beta $ for exactly $ \lambda _{ \beta }$-many $\alpha$'s.
By $\mathcal F$-$\CAP^* _ \lambda $, $ (G _ \alpha ) _{ \alpha \in \lambda } $
 has a
$ \lambda $-complete accumulation point $x$. 
It is immediate to show that $x$ is  also a 
$ \cf\lambda $-complete accumulation point for 
$ (F _ \beta  ) _{ \beta  \in \cf\lambda } $.
 \end{proof}

If $D$ is an ultrafilter, $Y$ is a $D$-compact Hausdorff space,
and $X \subseteq Y$, then there is the smallest $D$-compact subspace
$Z$ of $Y$ containing $X$. This is because the intersection
of any family of $D$-compact subspaces of $Y$ is still $D$-compact,
since, in a Hausdorff space, the $D$-limit of a sequence is unique (if it exists).
Such a $Z$ can be also constructed by an iteration procedure in $|I|^+$ stages,
if $D$ is over $I$. This is similar to, e. g., \cite[Theorem 2.12]{GS}, or 
\cite{GF}.

If $X$ is a Tychonoff space, and $Y= \beta (X)$ is the 
Stone-\v Cech compactification of $X$, the smallest $D$-compact subspace
 of $\beta (X)$ containing $X$ is called the 
$D$-\emph{compactification} of $X$, and is denoted by 
$ \beta _D(X)$. 
See, e. g., 
\cite[p. 14]{GF1},
\cite{GF}, or \cite{GS}  for further references and alternative definitions of the  
$D$-compactification (sometimes also called $D$-compact reflection).

\begin{example} \label{exampl}   
(a) If $\lambda$ is singular, then $\cf \lambda$, endowed with 
either the order topology or the discrete topology, fails to satisfy $\CAP _{ \cf \lambda }$, but trivially
satisfies $\CAP _{  \lambda }$.

(b) Suppose that $\lambda$ is singular, and $X$ is any Tychonoff 
space.
If $D$ is an ultrafilter uniform over $\cf \lambda$,
then the $D$-compactification $ \beta _D(X)$ of $X$
satisfies  $\CAP _{ \cf \lambda }$,
by Theorem \ref{equivcpn} (d) $\Rightarrow $ (c) and Proposition 
\ref{singreg} (a). 

(c) If $X$ is $\lambda$  with 
the discrete topology, then $X$ does not satisfy
$\CAP _{  \lambda }$. By (b) above, if 
$D$ is an ultrafilter uniform over $\cf \lambda$, then
the $D$-compactification $ \beta _D(X)$ of $X$
satisfies  $\CAP _{ \cf \lambda }$.
 However, $ \beta _D(X)$ does not satisfy
$\CAP _{  \lambda }$.
Thus, we have a space 
satisfying  $\CAP _{ \cf \lambda }$,
but  not satisfying
$\CAP _{  \lambda }$.

(d) In order to get an example as (c) above, it is not sufficient
to take any space $X$ 
 which does not satisfy  $\CAP _{  \lambda }$.
Indeed, if $X$ is $\lambda$ with 
the order topology,
then  
$ \beta _D(X)$ does satisfy
$\CAP _{  \lambda }$,
if $D$ is an  ultrafilter uniform over $\cf \lambda$.
 \end{example} 

The next proposition shows that, for $\lambda$ a singular cardinal,
$\CAP _{\cf \lambda }$ implies 
$\mathcal F$-$\CAP^* _{  \lambda }$, provided
that $\mathcal F$-$\CAP _{ \mu}$ holds for a set
of cardinals unbounded in $ \lambda $. 

\begin{proposition} \label{singreg2}
Suppose that 
$X$ is a topological space, $\mathcal F$
is a  family of nonempty subsets of $X$, $\lambda$ is a singular cardinal, and
 $(\lambda_ \beta ) _{ \beta \in \cf \lambda }$  is a sequence of cardinals $< \lambda $ such that 
$\sup_{ \beta \in \cf \lambda } \lambda_ \beta = \lambda  $.

If $X$ satisfies $\CAP _{\cf \lambda }$, and  $\mathcal F$-$\CAP _{ \lambda _ \beta }$, for every $ \beta \in \cf\lambda$,  then
$X$ satisfies $\mathcal F$-$\CAP^* _{  \lambda }$.

In particular, 
if $X$ satisfies $\CAP _{\cf \lambda }$, and  
$\CAP _{ \lambda _ \beta }$, for every $ \beta \in \cf\lambda$,  then
$X$ satisfies $\CAP^* _{  \lambda }$.
 \end{proposition}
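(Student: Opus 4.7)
The plan is to adapt the strategy of Proposition \ref{singreg}(b) to the asymmetric hypothesis available here: we have $\mathcal F$-$\CAP$ only at the smaller cardinals $\lambda_\beta<\lambda$, and plain $\CAP$ (for singletons) at $\cf\lambda$. Given a sequence $(F_\alpha)_{\alpha\in\lambda}$ of members of $\mathcal F$, I will produce a $\lambda$-complete accumulation point in three stages: chunk the index set into $\cf\lambda$ pieces of size $\lambda_\beta$, produce a local accumulation point $x_\beta$ on each piece via $\mathcal F$-$\CAP_{\lambda_\beta}$, and then glue the $x_\beta$'s together with $\CAP_{\cf\lambda}$.

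My first move is a cosmetic but important reduction: by passing to a cofinal subsequence I may assume that $(\lambda_\beta)_{\beta\in\cf\lambda}$ is strictly increasing, with each $\lambda_\beta$ infinite. This matters later because then, by regularity of $\cf\lambda$, every $B\subseteq\cf\lambda$ with $|B|=\cf\lambda$ is cofinal in $\cf\lambda$, so that $\sup_{\beta\in B}\lambda_\beta=\lambda$. I then fix a partition $\lambda=\bigsqcup_{\beta\in\cf\lambda} I_\beta$ with $|I_\beta|=\lambda_\beta$, which is possible since $\sum_\beta\lambda_\beta=\lambda$.

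For each fixed $\beta$, I look at the sub-family $(F_\alpha)_{\alpha\in I_\beta}$. If some $F\in\mathcal F$ appears $\lambda_\beta$-many times in this sub-family, then by nonemptiness of $F$ any point $x_\beta\in F$ is automatically a $\lambda_\beta$-complete accumulation point of the sub-family. Otherwise every member of $\mathcal F$ appears strictly fewer than $\lambda_\beta$ times, and then the same cardinal-arithmetic bookkeeping used in the proof of Proposition \ref{singreg} (infinite $\kappa,\mu<\lambda_\beta$ give $\kappa\cdot\mu<\lambda_\beta$) forces at least $\lambda_\beta$ distinct members of $\mathcal F$ to occur in the sub-family; the hypothesis $\mathcal F$-$\CAP_{\lambda_\beta}$, applied to a $\lambda_\beta$-subsequence of distinct values, delivers the desired $x_\beta$.

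Finally, I analyze the sequence $(x_\beta)_{\beta\in\cf\lambda}$. Either some value is taken $\cf\lambda$-many times (pigeonhole, since $\cf\lambda$ is regular), and I keep it as my candidate $x$; or the set $\{x_\beta:\beta\in\cf\lambda\}$ has cardinality $\cf\lambda$, in which case $\CAP_{\cf\lambda}$ supplies a complete accumulation point $x$ for it. In either case, for every neighborhood $U$ of $x$ the set $B:=\{\beta\in\cf\lambda:x_\beta\in U\}$ has size $\cf\lambda$; by the first step $\sum_{\beta\in B}\lambda_\beta=\lambda$, and since each $x_\beta$ with $\beta\in B$ is a $\lambda_\beta$-complete accumulation point of its own disjoint piece, I conclude $|\{\alpha\in\lambda:F_\alpha\cap U\neq\emptyset\}|\geq\sum_{\beta\in B}\lambda_\beta=\lambda$, so $x$ is the required $\lambda$-complete accumulation point. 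The ``In particular'' clause is immediate: specialize to $\mathcal F=\{\{y\}:y\in X\}$, for which $\mathcal F$-$\CAP_{\lambda_\beta}$ is literally $\CAP_{\lambda_\beta}$. I expect the main obstacle to be precisely the bookkeeping in this last step: without the strictly-increasing reduction at the start, a cofinal $B\subseteq\cf\lambda$ could oversample indices with small $\lambda_\beta$ and the sum could fall short of $\lambda$.
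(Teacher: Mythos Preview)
Your overall architecture is fine and matches the paper's, but there is a genuine gap in the step where you manufacture $x_\beta$. Your dichotomy for the piece $(F_\alpha)_{\alpha\in I_\beta}$ reads: either some $F$ occurs $\lambda_\beta$ times, or there are $\lambda_\beta$ distinct values. You justify this by ``infinite $\kappa,\mu<\lambda_\beta$ give $\kappa\cdot\mu<\lambda_\beta$'', which is exactly the statement that $\lambda_\beta$ is \emph{regular}. Nothing in the hypothesis forces that. If $\lambda_\beta$ is singular, the third case from the proof of Proposition~\ref{singreg}(b) can occur on your piece: fewer than $\lambda_\beta$ distinct values, each repeated fewer than $\lambda_\beta$ times. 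In Proposition~\ref{singreg}(b) that case was dispatched using $\mathcal F$-$\CAP_{\cf\lambda}$, but here you would need $\mathcal F$-$\CAP_{\cf\lambda_\beta}$, which you do not have. And you cannot simply pass to a subsequence of regular $\lambda_\beta$'s, since the hypothesis $\mathcal F$-$\CAP_{\lambda_\beta}$ is only assumed for the given $\lambda_\beta$'s.

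The paper sidesteps this neatly by \emph{not} attacking $\mathcal F$-$\CAP^*_\lambda$ directly. It first proves the unstarred $\mathcal F$-$\CAP_\lambda$: starting from a sequence $(F_\alpha)_{\alpha\in\lambda}$ of \emph{distinct} members of $\mathcal F$, the initial segment $(F_\alpha)_{\alpha<\lambda_\beta}$ is automatically a family of $\lambda_\beta$ distinct members, so $\mathcal F$-$\CAP_{\lambda_\beta}$ applies straight off to produce $x_\beta$, with no case analysis and no regularity assumption on $\lambda_\beta$. The gluing via $\CAP^*_{\cf\lambda}$ (obtained from $\CAP_{\cf\lambda}$ by Proposition~\ref{singreg}(a)) is then exactly as you do it. Finally, since the members of $\mathcal F$ are nonempty, $\CAP_{\cf\lambda}$ implies $\mathcal F$-$\CAP_{\cf\lambda}$, and Proposition~\ref{singreg}(b) upgrades $\mathcal F$-$\CAP_\lambda$ together with $\mathcal F$-$\CAP_{\cf\lambda}$ to the starred $\mathcal F$-$\CAP^*_\lambda$. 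If you reorganize your argument along these lines, everything you wrote after the production of the $x_\beta$'s (including your careful reduction to strictly increasing $\lambda_\beta$'s, which the paper leaves implicit) goes through unchanged.
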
  

\begin{proof}
We first prove that $X$ satisfies $\mathcal F$-$\CAP _{  \lambda }$.
The proof takes some ideas from \cite[proof of the proposition on p. 94]{Sa}.
So, let $ (F _ \alpha ) _{ \alpha \in \lambda } $
be a sequence of distinct elements of $\mathcal F$.
For every $ \beta \in \cf \lambda $,
 by  $\mathcal F$-$\CAP _{ \lambda _ \beta }$,
we get some element $x_ \beta $  
which is a  $ \lambda_ \beta  $-complete accumulation point for
 $ (F _ \alpha ) _{ \alpha \in \lambda_ \beta  } $.
By  $\CAP^* _{\cf \lambda }$
(which follows from $\CAP _{\cf \lambda }$, by Proposition \ref{singreg}(a)),
the sequence $(x_ \beta ) _{ \beta \in \cf \lambda } $
has some $\cf \lambda$-complete accumulation point $x$. 
It is now easy to see that $ x$ is a  
$ \lambda$-complete accumulation point
for $ (F _ \alpha ) _{ \alpha \in \lambda } $.

Since the members of $\mathcal F$ 
are nonempty, $\CAP _{\cf \lambda }$ implies 
$\mathcal F$-$\CAP _{ \cf \lambda }$, hence
$\mathcal F$-$\CAP^* _{  \lambda }$ follows from 
$\mathcal F$-$\CAP _{  \lambda }$, by 
Proposition \ref{singreg}(b).

The last statement follows by taking 
 $\mathcal F$ to be the family of all singletons of $X$. 
 \end{proof} 

The last statement in Proposition 
\ref{singreg2} 
has   appeared in 
\cite[Part VI, p. 2]{nuotop}.

\section{Relationship among compactness properties} \label{rel} 

In the next proposition we deal with the fundamental relationship,
for a given sequence, between
the existence of a $ \lambda $-complete accumulation point  and
the existence  of a $D$-limit point,
for $D$ uniform over $\lambda$. Then in Theorem  \ref{equivcpn} we
shall present more equivalent formulations referring to various compactness
properties.

\begin{proposition} \label{ufacc}
Suppose that $ \lambda $ is an infinite cardinal, and 
$ (Y _ \alpha ) _{ \alpha \in \lambda } $ is a sequence of subsets of 
some topological space $X$.

Then $x \in X$ is a
$ \lambda $-complete accumulation point  
of
$ (Y _ \alpha ) _{ \alpha \in \lambda } $ 
if and only if 
 there exists an ultrafilter $D$ uniform over $ \lambda $
such that 
 $x$ is  a $D$-limit point of $(Y _ \alpha ) _{ \alpha \in \lambda }$.

In particular, 
$ (Y _ \alpha ) _{ \alpha \in \lambda } $ 
has a $ \lambda $-complete accumulation point  
if and only if  
$(Y _ \alpha ) _{ \alpha \in \lambda }$ has a $D$-limit point, for some
ultrafilter $D$ uniform over $ \lambda $.
\end{proposition}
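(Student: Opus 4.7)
The plan is to prove the two directions of the equivalence separately, the easy direction following directly from uniformity of $D$, and the harder direction using a standard filter-extension argument.

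For the direction ``$D$-limit $\Rightarrow$ complete accumulation'', I would simply observe that if $D$ is uniform over $\lambda$ and $x$ is a $D$-limit point of $(Y_\alpha)_{\alpha\in\lambda}$, then for every neighborhood $U$ of $x$ the set $A_U=\{\alpha\in\lambda\mid Y_\alpha\cap U\neq\emptyset\}$ belongs to $D$, hence has cardinality $\lambda$ because $D$ is uniform. This is exactly the condition for $x$ to be a $\lambda$-complete accumulation point.

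For the converse, assume $x$ is a $\lambda$-complete accumulation point. The plan is to build a uniform ultrafilter $D$ over $\lambda$ containing every $A_U$ by invoking the finite intersection property. Consider the family
\[
\mathcal{A}=\{A_U\mid U\text{ a neighborhood of }x\}\cup\{\lambda\setminus B\mid B\subseteq\lambda,\ |B|<\lambda\}.
\]
Given neighborhoods $U_1,\dots,U_n$ and subsets $B_1,\dots,B_m$ of $\lambda$ each of cardinality $<\lambda$, set $U=U_1\cap\cdots\cap U_n$, which is again a neighborhood of $x$. Then $A_U\subseteq A_{U_1}\cap\cdots\cap A_{U_n}$, and by hypothesis $|A_U|=\lambda$. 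Since $|B_1\cup\cdots\cup B_m|<\lambda$, the intersection $A_U\setminus(B_1\cup\cdots\cup B_m)$ still has cardinality $\lambda$ and in particular is nonempty. Hence $\mathcal{A}$ has the finite intersection property and extends to an ultrafilter $D$ on $\lambda$. Because $D$ contains $\{\lambda\setminus B\mid |B|<\lambda\}$, it is uniform, and because it contains every $A_U$, the point $x$ is a $D$-limit of $(Y_\alpha)_{\alpha\in\lambda}$.

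The ``in particular'' statement is an immediate consequence of the equivalence applied pointwise, with no further argument needed. The only mildly delicate point is the singular case, where one must note that a finite union of sets of cardinality $<\lambda$ still has cardinality $<\lambda$ (which holds for any infinite $\lambda$); this is why the co-$(<\lambda)$ filter is genuinely a filter and the FIP verification goes through without assuming regularity of $\lambda$. I do not expect any significant obstacle beyond keeping this cardinal-arithmetic observation in mind.
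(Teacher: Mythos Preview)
Your proof is correct and follows essentially the same route as the paper: both define the family consisting of the sets $A_U$ together with the co-$(<\lambda)$ sets, verify the finite intersection property (you are slightly more explicit in taking $U=U_1\cap\cdots\cap U_n$), and extend to a uniform ultrafilter; the reverse direction is dismissed as trivial in both cases.
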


  \begin{proof}
If 
$x \in X$ is a
$ \lambda $-complete accumulation point  
of
$ (Y _ \alpha ) _{ \alpha \in \lambda } $, then the family $\mathcal H$ consisting of the sets
$ \{ \alpha \in \lambda \mid Y_ \alpha \cap U \not= \emptyset  \}$
($U$ a neighborhood of $x$) and $  \lambda \setminus Z $ ($  |Z|< \lambda  $)
has the finite intersection property, indeed,
the intersection of any finite set of members of $\mathcal H$
has cardinality $\lambda$. Hence $\mathcal H$ can be extended to some ultrafilter
$D$, which is necessarily uniform over $ \lambda $. It is trivial to see that, for such a $D$,  $x$ is  a $D$-limit point of $(Y _ \alpha ) _{ \alpha \in \lambda }$.

The converse is trivial, since the ultrafilter $D$ 
is assumed to be uniform over $ \lambda $.
 \end{proof} 

The particular case of Proposition \ref{ufacc} in which all $Y_ \alpha $'s are 
distinct one-element sets is well-known. See \cite[pp. 80--81]{Sa}.

\begin{definition} \label{fcpn}
If $X$ is a topological space, and
$\mathcal F$ is a family of subsets of $X$,
we say that $X$ is
$\mathcal F$-$ [ \mu, \lambda ]$-\emph{compact}
if and only if the following holds.

For every family $( C _ \alpha ) _{ \alpha \in \lambda } $
 of closed sets of $X$, if,
for every $Z \subseteq \lambda $ with $ |Z|< \mu$,
there exists $F \in \mathcal F$ such that   
$ \bigcap _{ \alpha \in Z}  C_ \alpha \supseteq F$,
then  
$ \bigcap _{ \alpha \in \lambda }  C_ \alpha \not= \emptyset $.
\end{definition}   

Of course, in the particular case when $\mathcal F$ is the set of all the singletons,
$\mathcal F$-$ [ \mu,\lambda]$-compactness
is the usual notion of $ [ \mu, \lambda ]$-compactness.

\begin{remark} \label{fol} 
Trivially, if $\mathcal F \rhd \mathcal G$, and 
$X$ is
$\mathcal G$-$ [ \mu, \lambda ]$-compact,
then $X$ is
$\mathcal F$-$ [ \mu, \lambda ]$-compact.

Recall that if $\mathcal F$ is a family of subsets of $X$,
we have defined  $ \overline{  \mathcal F} = \{ \overline{F} \mid \ F \in \mathcal F\} $.
It is trivial to observe that 
$X$ is
$\mathcal F$-$ [ \mu, \lambda ]$-compact if and only if 
$X$ is
$ \overline{ \mathcal F}$-$ [ \mu, \lambda ]$-compact. 
\end{remark}  

\begin{theorem} \label{equivcpn}
Suppose that $X$ is a topological space,
$\mathcal F$ is a family of subsets of $X$,
and $ \lambda $ is a regular cardinal. Then
the following conditions are equivalent.
 
(a) $X$ is $\mathcal F$-$ [ \lambda , \lambda ]$-compact.

(b) Suppose that  $( C _ \alpha ) _{ \alpha \in \lambda } $
is a family of closed sets of $X$ such that $C_ \alpha \supseteq C_ \beta $,
whenever $ \alpha \leq \beta  < \lambda $.
If, for every $ \alpha \in \lambda $, there exists   
$F \in \mathcal F$ such that   
$  C_ \alpha \supseteq F$,
then  
$ \bigcap _{ \alpha \in \lambda }  C_ \alpha \not= \emptyset $.

(b$_1$) Suppose that  $( C _ \alpha ) _{ \alpha \in \lambda } $
is a family of closed sets of $X$ such that $C_ \alpha \supseteq C_ \beta $,
whenever $ \alpha \leq \beta  < \lambda $.
Suppose further that, for every $ \alpha \in \lambda $,
$C _ \alpha $ is the closure of the union of some set of members of $\mathcal F$.   
If, for every $ \alpha \in \lambda $, there exists   
$F \in \mathcal F$ such that   
$  C_ \alpha \supseteq F$,
then  
$ \bigcap _{ \alpha \in \lambda }  C_ \alpha \not= \emptyset $.

(b$_2$) Suppose that  $( C _ \alpha ) _{ \alpha \in \lambda } $
is a family of closed sets of $X$ such that $C_ \alpha \supseteq C_ \beta $,
whenever $ \alpha \leq \beta  < \lambda $.
Suppose further that, for every $ \alpha \in \lambda $,
$C _ \alpha $ is the closure of the union of some set of 
$\leq \lambda $ 
members of $\mathcal F$.   
If, for every $ \alpha \in \lambda $, there exists   
$F \in \mathcal F$ such that   
$  C_ \alpha \supseteq F$,
then  
$ \bigcap _{ \alpha \in \lambda }  C_ \alpha \not= \emptyset $.

(c) Every sequence $ (F _ \alpha ) _{ \alpha \in \lambda } $  of elements
of $\mathcal F$ has a
$ \lambda $-complete accumulation point (that is, $X$ 
satisfies $\mathcal F$-$\CAP^* _ \lambda $).  

(d) For every sequence $ (F _ \alpha ) _{ \alpha \in \lambda } $  of elements
of $\mathcal F$, there exists some ultrafilter $D$ uniform over $\lambda$ such that
 $ (F _ \alpha ) _{ \alpha \in \lambda } $ has a $D$-limit point.

(e) For every $\lambda$-indexed open cover
 $( O _ \alpha ) _{ \alpha \in \lambda } $
 of $X$, there exists $Z \subseteq \lambda $, with $ |Z|< \lambda $,
such that, for every  $F \in \mathcal F$,    
$ F \cap \bigcup _{ \alpha \in Z}  O_ \alpha \not= \emptyset $.

(f) For every $\lambda$-indexed open cover
 $( O _ \alpha ) _{ \alpha \in \lambda } $
 of $X$, such that $O_ \alpha \subseteq O_ \beta $
whenever $ \alpha \leq \beta  < \lambda $, 
there exists $ \alpha \in \lambda$ such that 
$O_ \alpha $ intersects each   $F \in \mathcal F$.

In each of the above conditions we can equivalently replace 
$\mathcal F$ by $ \overline{  \mathcal F}$. 

If $\mathcal F \rhd \mathcal G$ and $\mathcal G \rhd \mathcal F$,
 then in each of the above conditions we can equivalently replace 
$\mathcal F$ by  $\mathcal G$.
 \end{theorem}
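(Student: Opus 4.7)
The plan is to establish the cycle
\[
\text{(a)} \Rightarrow \text{(b)} \Rightarrow \text{(b$_1$)} \Rightarrow \text{(b$_2$)} \Rightarrow \text{(c)} \Rightarrow \text{(b)}\Rightarrow \text{(a)},
\]
together with (c) $\Leftrightarrow$ (d) via Proposition \ref{ufacc}, and the two duality statements (a) $\Leftrightarrow$ (e) and (b) $\Leftrightarrow$ (f) by taking complements. The invariance under passage to $\overline{\mathcal F}$, and more generally under $\mathcal F \rhd \mathcal G$ and $\mathcal G \rhd \mathcal F$, will be noted at the end, quoting Remarks \ref{opbase} and \ref{fol}.

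Among the vertical implications, (a) $\Rightarrow$ (b) is immediate because a decreasing chain satisfies the hypothesis of (a): for any $Z \subseteq \lambda$ with $|Z|<\lambda$, setting $\alpha = \sup Z < \lambda$ by regularity, we have $\bigcap_{\beta \in Z} C_\beta \supseteq C_\alpha \supseteq F$ for the given $F$. The implications (b) $\Rightarrow$ (b$_1$) $\Rightarrow$ (b$_2$) are trivial restrictions. For (b$_2$) $\Rightarrow$ (c), given a sequence $(F_\alpha)_{\alpha \in \lambda}$ of elements of $\mathcal F$, set
\[
C_\alpha = \overline{\textstyle\bigcup_{\beta \geq \alpha} F_\beta},
\]
which is a decreasing chain of closed sets, each being the closure of a union of $\leq \lambda$ members of $\mathcal F$ and each containing $F_\alpha$; hypothesis (b$_2$) yields $x \in \bigcap_\alpha C_\alpha$, and for any neighborhood $U$ of $x$ and any $\alpha < \lambda$ the set $\{\beta \geq \alpha : F_\beta \cap U \neq \emptyset\}$ is nonempty, so the set $\{\beta : F_\beta \cap U \neq \emptyset\}$ is cofinal, hence of cardinality $\lambda$ by regularity.

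For (c) $\Rightarrow$ (b), given the decreasing chain $(C_\alpha)$ with $F_\alpha \subseteq C_\alpha$, I apply (c) to $(F_\alpha)_{\alpha \in \lambda}$ to get a $\lambda$-complete accumulation point $x$; for any $\alpha$ and any neighborhood $U$ of $x$, cofinally many $\beta \geq \alpha$ satisfy $F_\beta \cap U \neq \emptyset$, hence $C_\alpha \cap U \neq \emptyset$ since $F_\beta \subseteq C_\alpha$, and $x \in C_\alpha$ follows from closedness. For (b) $\Rightarrow$ (a), given an arbitrary family $(C_\alpha)_{\alpha \in \lambda}$ satisfying the hypothesis of (a), I form the decreasing chain $C'_\alpha = \bigcap_{\beta \leq \alpha} C_\beta$; since $|\{\beta : \beta \leq \alpha\}|<\lambda$, each $C'_\alpha$ contains some $F \in \mathcal F$, and (b) gives $\bigcap_\alpha C'_\alpha = \bigcap_\alpha C_\alpha \neq \emptyset$. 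The equivalence (c) $\Leftrightarrow$ (d) is exactly the content of Proposition \ref{ufacc} applied sequence-by-sequence.

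Finally, (a) $\Leftrightarrow$ (e) and (b) $\Leftrightarrow$ (f) are obtained by the substitution $O_\alpha = X \setminus C_\alpha$: the cover condition $\bigcup_\alpha O_\alpha = X$ translates to $\bigcap_\alpha C_\alpha = \emptyset$, and ``every $F \in \mathcal F$ meets $\bigcup_{\alpha \in Z} O_\alpha$'' is equivalent to ``no $F \in \mathcal F$ is contained in $\bigcap_{\alpha \in Z} C_\alpha$''; this converts the contrapositive of (a) into (e), and similarly for (b) and (f). The last two sentences of the theorem are immediate from the observations that $\mathcal F$-$\CAP^*_\lambda$ and $\mathcal F$-$[\lambda,\lambda]$-compactness are monotone under $\rhd$ and unaffected by replacing $\mathcal F$ with $\overline{\mathcal F}$, already recorded before the theorem. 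The one step that requires care is (b$_2$) $\Rightarrow$ (c), as it is the only place where regularity of $\lambda$ is used to upgrade cofinality of a set in $\lambda$ to cardinality $\lambda$; all other implications go through for arbitrary infinite $\lambda$.
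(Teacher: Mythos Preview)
Your proposal is correct and follows essentially the same route as the paper: the same cycle through (a), (b), (b$_1$), (b$_2$), (c), the same use of Proposition~\ref{ufacc} for (c) $\Leftrightarrow$ (d), and the same complementation for (e), (f). One small inaccuracy: your closing remark that (b$_2$) $\Rightarrow$ (c) is ``the only place where regularity of $\lambda$ is used'' contradicts your own argument for (a) $\Rightarrow$ (b), where you explicitly invoke regularity to get $\sup Z < \lambda$; the paper likewise flags regularity at that step.
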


\begin{proof}
(a) $\Rightarrow $  (b) is obvious, since $\lambda$ is regular.

Conversely, suppose that (b) holds, and that $( C _ \alpha ) _{ \alpha \in \lambda } $
 are closed sets of $X$ such that,
for every $Z \subseteq \lambda $ with $ |Z|< \mu$,
there exists $F \in \mathcal F$ such that   
$ \bigcap _{ \alpha \in Z}  C_ \alpha \supseteq F$.

For $ \alpha \in \lambda $, define $D_ \alpha =\bigcap _{ \beta < \alpha } C_ \beta  $.
The $ D _ \alpha $'s are closed sets of $X$, and satisfy the assumption in (b),
hence  $ \bigcap _{ \alpha \in \lambda }  D_ \alpha \not= \emptyset $. 
But $ \bigcap _{ \alpha \in \lambda }  C_ \alpha =
 \bigcap _{ \alpha \in \lambda }  D_ \alpha\not= \emptyset $,
thus (a) is proved.

(b) $\Rightarrow $ (b$_1$) $\Rightarrow $ (b$_2$) are trivial.

(b$_2$) $\Rightarrow $ (c) Suppose that (b$_2$) holds, and
that $ (F _ \alpha ) _{ \alpha \in \lambda } $  are  elements
of $\mathcal F$.
For $ \alpha \in \lambda $, 
let $C_ \alpha $ be the closure of 
$\bigcup _{ \beta > \alpha } F_ \beta  $.
The $C_ \alpha $'s satisfy the assumptions in (b$_2$), hence 
$ \bigcap _{ \alpha \in \lambda }  C_ \alpha \not= \emptyset $.
Let $x \in \bigcap _{ \alpha \in \lambda }  C_ \alpha $.
 We want to show that $x$ is a $ \lambda $-complete accumulation point
for $ (F _ \alpha ) _{ \alpha \in \lambda } $.
Indeed, suppose by contradiction that
$ | \{ \alpha \in \lambda \mid F_ \alpha \cap U \not= \emptyset  \} | < \lambda $,
for some  neighborhood $U$ of $x$ in $X$.    
If $ \beta = \sup \{ \alpha \in \lambda \mid F_ \alpha \cap U \not= \emptyset  \}$,
then $ \beta < \lambda $, since $\lambda$ is regular, and we are taking the supremum
of a set of cardinality $<\lambda$.  Thus, $F_ \alpha \cap U = \emptyset $,
for every $ \alpha > \beta $, hence 
$U \cap \bigcup _{ \alpha > \beta } F_ \alpha = \emptyset  $, and 
$x \not\in C_ \beta $, a contradiction.   

(c) $\Rightarrow $ (b)
Suppose that (c) holds, and that $( C _ \alpha ) _{ \alpha \in \lambda } $
satisfies the premise of (b). For each $ \alpha \in \lambda $, choose 
$F_ \alpha \in \mathcal F$ with
$F _ \alpha \subseteq C _ \alpha $. By (c), $ (F _ \alpha ) _{ \alpha \in \lambda } $    has a
$ \lambda $-complete accumulation point $x$. Hence, for every neighborhood 
$U$ of $x$, there are arbitrarily large $ \alpha < \lambda $
such that  $U$ intersects $F_ \alpha $, so  there are arbitrarily large $ \alpha < \lambda $
such that  $U$ intersects $C_ \alpha $, hence 
$U$ intersects every  $C_ \alpha $, since the $C_ \alpha $'s
form a decreasing sequence. In conclusion, for every $ \alpha \in \lambda $, every neighborhood 
of $x$ intersects $C_ \alpha $, that is, $x \in C_ \alpha $,
since $C_ \alpha $ is closed.

(c) $\Leftrightarrow $ (d) is immediate from  
Proposition \ref{ufacc}.

(e) and (f) are obtained from (a) and (b), respectively,
by taking complements. 

It follows from preceding remarks that we get  equivalent
conditions when we replace 
$\mathcal F$ by $ \overline{  \mathcal F}$,
or by $\mathcal G$, if 
 $\mathcal F \rhd \mathcal G$ and $\mathcal G \rhd \mathcal F$. 
\end{proof} 

In the particular case when $\mathcal F$ is the set of all singletons,
 the equivalence of the conditions in Theorem \ref{equivcpn}
(except perhaps for conditions (b$_1$) (b$_2$)) is well-known
and, for the most part, dates back already to 
Alexandroff and Urysohn's classical survey \cite{AU}.
See, e.g., \cite{VLNM, Vfund} for further comments and references.

\begin{remark} \label{omegaps}  
In the particular case when $\lambda= \omega $, 
$X$ is Tychonoff and
 $\mathcal F$ is the family of all nonempty sets of $X$, in
Theorem \ref{equivcpn} 
we get conditions equivalent to pseudocompactness,
since, as we mentioned,  a result by Glicksberg 
implies that, for Tychonoff spaces,
$\mathcal F$-$\CAP^* _ \omega  $
is equivalent to pseudocompactness.
Some of these equivalences are known: for example,
Condition (e) becomes Condition (C$_5$) in
\cite{St}. 
 \end{remark}

\begin{corollary} \label{equivcpncor}
Suppose that $X$ is a topological space,
$\mathcal F$ is a family of subsets of $X$,
and $ \lambda $ is a regular cardinal.
If $X$ is $\mathcal F$-$D$-compact, for some ultrafilter $D$
uniform over $\lambda$, then all the conditions in Theorem \ref{equivcpn}
hold.
 \end{corollary}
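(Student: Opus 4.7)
The plan is to verify condition (d) of Theorem \ref{equivcpn} directly from the hypothesis, and then invoke that theorem to obtain all remaining equivalent conditions for free. Recall that by Definition \ref{fcomp}, $\mathcal{F}$-$D$-compactness means that every family $(F_z)_{z\in Z}$ of members of $\mathcal{F}$ indexed by the base set $Z$ of $D$ admits a $D$-limit point in $X$. Since the hypothesis assumes $D$ is uniform over $\lambda$, we may take $Z=\lambda$, and the assumption applies exactly to the $\lambda$-indexed sequences that appear in conditions (c) and (d).

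Concretely, given any sequence $(F_\alpha)_{\alpha\in\lambda}$ of elements of $\mathcal{F}$, $\mathcal{F}$-$D$-compactness yields some $x\in X$ which is a $D$-limit point of $(F_\alpha)_{\alpha\in\lambda}$; taking $D'=D$ in the statement of (d) then furnishes the required ultrafilter uniform over $\lambda$. Since $\lambda$ is assumed regular, Theorem \ref{equivcpn} applies and delivers all of (a), (b), (b$_1$), (b$_2$), (c), (e), (f) as well. I foresee no genuine obstacle; the only subtlety worth flagging is that the single ultrafilter $D$ provided by the hypothesis witnesses (d) uniformly for every sequence, which is in fact stronger than the pointwise existential form of (d) requires, so no selection of different ultrafilters for different sequences (and no use of Proposition \ref{ufacc}) is necessary.
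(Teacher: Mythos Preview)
Your proof is correct and follows essentially the same approach as the paper: verify condition (d) of Theorem \ref{equivcpn} directly from $\mathcal F$-$D$-compactness (with the given $D$ serving as witness for every sequence), then invoke the equivalences of that theorem. Your additional remark that the hypothesis is actually stronger than (d) is accurate but not needed for the argument.
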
 

\begin{proof} 
If $X$ is $\mathcal F$-$D$-compact, for some ultrafilter $D$
uniform over $\lambda$, then Condition \ref{equivcpn} (d) holds,
hence all the other equivalent conditions hold.
\end{proof}

\section{Behavior with respect to products} \label{behprod} 

We now discuss the behavior of
$\mathcal F$-$D$-compactness 
with respect to products.

\begin{proposition} \label{prod}
Suppose that $(X_i) _{i \in I} $ 
is a family of topological spaces, and 
let $X= \prod _{i \in I} X_i $, with the Tychonoff topology.
Let $D$ be an ultrafilter  over $\lambda$.

(a) Suppose that,  for each $i \in I$,
$(Y _{i, \alpha }) _{ \alpha \in \lambda } $ is a sequence of subsets of $X_i$.
Then some point $x=(x_ i ) _{ i \in I } $ is a $D$-limit point of 
$( \prod _{i \in I}  Y _{i, \alpha }) _{ \alpha \in \lambda } $ in $X$ 
if and only if, for each $i \in I$,  
$x_i$ is a $D$-limit point of  $(   Y _{i, \alpha }) _{ \alpha \in \lambda } $
in $X_i$.

In particular, 
$( \prod _{i \in I}  Y _{i, \alpha }) _{ \alpha \in \lambda } $ 
has a $D$-limit point 
in $X$ 
if and only if, for each $i \in I$,  
  $(   Y _{i, \alpha }) _{ \alpha \in \lambda } $
has a $D$-limit point in $X_i$.

(b) Suppose that, for each $i \in I$,
$\mathcal F_i$ is a family of subsets of $X_i$,
and let $\mathcal F$ 
be either

- the family of all subsets of $X$ of the form 
$\prod _{i \in I} F_i$, where each $F_i$ belongs to $\mathcal F_i$, or

- for some fixed cardinal $ \nu>1$,
the family of all subsets of $X$ of the form 
$\prod _{i \in I} F_i$, where, for some 
$I' \subseteq I$ with $|I'|<\nu$,  
 $F_i$ belongs to $\mathcal F_i$, for $i \in I'$,
and $F_i = X_i$, for $i \in I \setminus I'$.  

Then $X$ is $\mathcal F$-$D$-compact if and only if 
$X_i$ is 
$\mathcal F_i$-$D$-compact, for every $i \in I$.
 \end{proposition}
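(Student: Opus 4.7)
The plan is to derive (a) from the structure of the Tychonoff base, and then reduce (b) to (a) coordinatewise.

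For (a), I would use that a basic open neighborhood of $x = (x_i)_{i \in I}$ has the form $V = \prod_{i \in I} V_i$ with $V_i$ an open neighborhood of $x_i$ and $V_i = X_i$ for $i$ outside some finite $I_0 \subseteq I$. Then $(\prod_i Y_{i,\alpha}) \cap V = \prod_i (Y_{i,\alpha} \cap V_i)$ is non-empty if and only if every factor is; assuming each $Y_{i,\alpha}$ is non-empty (as is natural in the intended applications, where these sets come from $\mathcal F$), the factors outside $I_0$ are automatically non-empty, and
$$\{ \alpha \in \lambda \mid ( \textstyle\prod_i Y_{i,\alpha} ) \cap V \not= \emptyset \} = \bigcap_{i \in I_0} \{ \alpha \in \lambda \mid Y_{i,\alpha} \cap V_i \not= \emptyset \}.$$
This is a \emph{finite} intersection, so it belongs to $D$ if and only if each of its factors does. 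The ``only if'' direction then follows by specializing to $I_0 = \{i_0\}$, and the ``if'' direction by collecting the finitely many sets of full $D$-measure supplied by the coordinatewise hypothesis.

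For (b), I would apply (a) in both directions. ``If'': given $(F_\alpha)_{\alpha \in \lambda}$ in $\mathcal F$ with $F_\alpha = \prod_i F_{i,\alpha}$, invoke $\mathcal F_i$-$D$-compactness of each $X_i$ on the coordinate sequence to produce a $D$-limit $x_i$, and then appeal to (a) to get a $D$-limit of $(F_\alpha)$ in $X$. In the second form of $\mathcal F$, some $F_{i,\alpha}$ may equal $X_i$ rather than lie in $\mathcal F_i$; I would pre-select some $F^0_i \in \mathcal F_i$ for each $i$ and replace each such $X_i$ by $F^0_i$, obtaining a sequence $G_{i,\alpha} \subseteq F_{i,\alpha}$ genuinely in $\mathcal F_i$. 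Enlarging sets preserves $D$-limits (since $G \subseteq F$ gives $\{ \alpha \mid G_\alpha \cap U \not= \emptyset \} \subseteq \{ \alpha \mid F_\alpha \cap U \not= \emptyset \}$, and $D$ is closed under supersets), so a $D$-limit of the $G$-sequence works for the original $F$-sequence. ``Only if'': given $(F_{i_0,\alpha})_\alpha$ in $\mathcal F_{i_0}$, embed it into $\mathcal F$ by padding the coordinates $j \not= i_0$ with a fixed $F^0_j \in \mathcal F_j$ (first form) or with $X_j$ and $I'_\alpha = \{i_0\}$ (second form, legitimate since $1 < \nu$); a $D$-limit of this test sequence in $X$ projects via (a) to the required $D$-limit in $X_{i_0}$.

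The main obstacle is the mild asymmetry in the second form of $\mathcal F$, where the coordinate sequences are not literally sequences in $\mathcal F_i$ (some terms may equal $X_i$). The monotonicity of $D$-limits under enlargement, together with the harmless assumption that each $\mathcal F_i$ is non-empty, dissolves this; without this assumption both sides of the equivalence collapse to trivialities.
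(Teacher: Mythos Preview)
The paper states this proposition without proof, treating it as folklore, so there is no argument to compare against; your approach is the natural one and is correct.  Using the finite-support structure of basic Tychonoff neighborhoods together with closure of $D$ under finite intersections is exactly how (a) is obtained, and reducing (b) to (a) coordinatewise, with the monotonicity-under-enlargement trick to handle the $F_{i,\alpha}=X_i$ terms in the second form of $\mathcal F$, is the right move.

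You also correctly flag a genuine subtlety that the paper glosses over: the ``if'' direction of (a) is false as literally stated if infinitely many $Y_{i,\alpha}$ are allowed to be empty.  For instance, take $I=\lambda=\omega$, each $X_i$ a one-point space, $Y_{i,\alpha}=\emptyset$ exactly when $\alpha=i$, and $D$ nonprincipal; then every coordinate sequence has a $D$-limit (the cofinite sets lie in $D$), yet $\prod_i Y_{i,\alpha}=\emptyset$ for every $\alpha$.  Your added nonemptiness hypothesis repairs this, and in the paper's actual uses of (a) --- inside (b) and in Theorem~\ref{fprod} --- nonemptiness is automatic, since $\mathcal F_i$-$D$-compactness of $X_i$ already forces every member of $\mathcal F_i$ to be nonempty (the constant sequence at $\emptyset$ has no $D$-limit).
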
 

\begin{theorem} \label{fprod} 
Suppose that $X$ is a topological space, and that $\mathcal F$
is a family of subsets of $X$.
For every cardinal $ \delta $, let
$X^ \delta $ be the $ \delta ^{ \text{th} } $ power of $X$,
endowed with the Tychonoff topology, and let 
$\mathcal F^ \delta $ be the family of all products
of $\delta$ members of $\mathcal F$.
Then, for every cardinal $\lambda$, the following are equivalent.
\begin{enumerate} 
\item 
There exists some ultrafilter $D$ uniform over $\lambda$ such that
$X$ is $\mathcal F$-$D$-compact.
\item 
There exists some ultrafilter $D$ uniform over $\lambda$ such that,
for every cardinal $\delta$,
the space $X^ \delta $ is $\mathcal F^ \delta $-$D$-compact.
\item
$X^ \delta $ satisfies 
$\mathcal F^ \delta $-$\CAP^* _ \lambda $,
for every cardinal $\delta$
(if $\lambda$ is regular, then all the equivalent conditions in Theorem  \ref{equivcpn} hold, for $X^ \delta $ and $\mathcal F^ \delta $).
\item
$X^ \delta $ satisfies 
$\mathcal F^ \delta $-$\CAP^* _ \lambda $,
for  $\delta= \min \{ 2 ^{2^ \lambda }, |\mathcal F|^ \lambda \}$ (if $\lambda$ is regular, then all the equivalent conditions in Theorem  \ref{equivcpn} hold,
for $X^ \delta $ and $\mathcal F^ \delta $).
\end{enumerate}   
\end{theorem}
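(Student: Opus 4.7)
The plan is to prove the cycle $(1) \Rightarrow (2) \Rightarrow (3) \Rightarrow (4) \Rightarrow (1)$. The first three implications are nearly immediate and the real content is in closing the cycle.

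For $(1) \Rightarrow (2)$ I invoke Proposition \ref{prod}(b) with index set $\delta$, every $X_i := X$ and every $\mathcal F_i := \mathcal F$: the ``full product'' version of the product family is precisely $\mathcal F^\delta$, so $\mathcal F$-$D$-compactness of $X$ automatically yields $\mathcal F^\delta$-$D$-compactness of each power $X^\delta$. For $(2) \Rightarrow (3)$, any sequence of members of $\mathcal F^\delta$ has a $D$-limit point by (2), and because $D$ is uniform over $\lambda$, Proposition \ref{ufacc} (applied inside $X^\delta$) upgrades this to a $\lambda$-complete accumulation point, i.e.\ $\mathcal F^\delta$-$\CAP^*_\lambda$; the parenthetical conclusion about regular $\lambda$ then follows from Theorem \ref{equivcpn}. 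The implication $(3) \Rightarrow (4)$ is just specialization to the stated $\delta$.

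The main obstacle is $(4) \Rightarrow (1)$, where one has to synthesize a \emph{single} ultrafilter $D$ that controls every $\mathcal F$-valued sequence at once. My strategy is a diagonal packing: fold all potentially obstructing sequences into one $\mathcal F^\delta$-valued sequence, whose $D$-limit point must then decompose, via Proposition \ref{prod}(a), into simultaneous $D$-limits of each coordinate. For the bound $\delta = |\mathcal F|^\lambda$, enumerate as $\{(F^j_\alpha)_{\alpha \in \lambda} : j \in \delta\}$ all $\lambda$-sequences of members of $\mathcal F$ and set $G_\alpha := \prod_{j \in \delta} F^j_\alpha \in \mathcal F^\delta$; hypothesis (4) combined with Proposition \ref{ufacc} produces some ultrafilter $D$ uniform over $\lambda$ for which $(G_\alpha)_{\alpha \in \lambda}$ has a $D$-limit point in $X^\delta$, whence by Proposition \ref{prod}(a) so does every $(F^j_\alpha)_{\alpha \in \lambda}$ in $X$. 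Since $j$ exhausts all $\mathcal F$-valued sequences, $X$ is $\mathcal F$-$D$-compact.

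For the alternative bound $\delta = 2^{2^\lambda}$ I would argue by contradiction: if (1) fails, then for each of the at most $2^{2^\lambda}$ ultrafilters $D$ uniform over $\lambda$ pick a witness sequence $(F^D_\alpha)_{\alpha \in \lambda}$ admitting no $D$-limit point in $X$, pad the indexing to length $\delta$, form $G_\alpha := \prod_D F^D_\alpha$, and apply (4), Proposition \ref{ufacc}, and Proposition \ref{prod}(a) exactly as above to obtain some $D^*$ uniform over $\lambda$ for which $(F^{D^*}_\alpha)_{\alpha \in \lambda}$ has a $D^*$-limit point in $X$, contradicting the choice of the witness. The delicate point shared by both arguments is the coordinate-wise transfer of Proposition \ref{prod}(a): this is precisely what forces a diagonal packing of potentially uncontrolled sequences to collapse into a uniform choice of $D$, and hence the place where the proof could go wrong if the variant of $\mathcal F^\delta$ used in Proposition \ref{prod} were not the ``full product'' one employed here.
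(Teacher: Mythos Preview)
Your proof is correct and follows essentially the same route as the paper's: the cycle $(1)\Rightarrow(2)\Rightarrow(3)\Rightarrow(4)\Rightarrow(1)$ via Proposition~\ref{prod}(b), Proposition~\ref{ufacc}, specialization, and then the diagonal packing argument for $(4)\Rightarrow(1)$, splitting into the two cases $\delta=|\mathcal F|^\lambda$ and $\delta=2^{2^\lambda}$ exactly as the paper does. Your remark about padding the index set to length $\delta$ in the second case is a reasonable way to handle the fact that one only has \emph{at most} $2^{2^\lambda}$ uniform ultrafilters; the paper glosses over this by citing that there are $2^{2^\lambda}$ ultrafilters over $\lambda$.
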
  

\begin{proof} 
(1) $\Rightarrow $  (2) follows from Proposition \ref{prod}(b).

(2) $\Rightarrow $  (3) follows from Proposition \ref{ufacc}.

(3) $\Rightarrow $  (4) is trivial.

(4) $\Rightarrow $  (1) We first 
consider the case $ \delta = |\mathcal F|^ \lambda $.
 Thus, there are $ \delta $-many
 $\lambda$-indexed
sequences of elements of $\mathcal F$. Let us enumerate
them as $(F _{\beta , \alpha} ) _{ \alpha \in \lambda } $,
$ \beta $ varying in  $ \delta $.

In $X^ \delta $,
consider the sequence 
$(\prod _{ \beta \in \delta } F _{\beta , \alpha} ) _{ \alpha \in \lambda } $
of elements of $\mathcal F^ \delta $.
By (4), the above sequence has a
$\lambda$-complete accumulation point and, by Proposition 
\ref{ufacc}, there exists some ultrafilter $D$ uniform over $\lambda$ 
such that $(\prod _{ \beta \in \delta } F _{\beta , \alpha} ) _{ \alpha \in \lambda } $
has a $D$-limit point $x$ in  $X^ \delta $.
Say, $x=(x_ \beta ) _{ \beta \in \delta } $.
By Proposition \ref{prod}(a), for every $ \beta \in \delta $,
$ x_ \beta $ is a $D$-limit point of
  $(F _{\beta , \alpha} ) _{ \alpha \in \lambda } $ in $X$.

Since every $\lambda$-indexed
sequence of elements of $\mathcal F$ has the form 
  $(F _{\beta , \alpha} ) _{ \alpha \in \lambda } $,
for some $ \beta \in \delta $, we have that 
 every $\lambda$-indexed
sequence of elements of $\mathcal F$
has some $D$-limit point in $X$, that is, 
$X$ is $\mathcal F$-$D$-compact.

Now we consider the case 
$\delta= 2 ^{2^ \lambda } $.
 We shall prove that
if   $\delta= 2 ^{2^ \lambda } $ and
(1) fails, then  (4) fails.
If (1) fails, then, for every ultrafilter $D$ uniform over $\lambda$,
there is a sequence $(F _ \alpha ) _{ \alpha \in \lambda } $ of elements in 
$\mathcal F$  which has no $D$-limit point.
Since there are $\delta $-many ultrafilters over $\lambda$,
we can enumerate the above sequences 
as $(F _{\beta , \alpha} ) _{ \alpha \in \lambda } $,
$ \beta $ varying in  $ \delta $.

 Now the sequence 
$(\prod _{ \beta \in \delta } F _{\beta , \alpha} ) _{ \alpha \in \lambda } $
in $X^ \delta $
 has no
$\lambda$-complete accumulation point in $X^ \delta $ since, otherwise, by Proposition 
\ref{ufacc}, for some ultrafilter $D$ uniform over $\lambda$, 
it would have some $D$-limit point in  $X^ \delta $. However, this contradicts 
 Proposition \ref{prod}(a) since, by assumption, there is a $ \beta $
 such that $(F _{\beta , \alpha} ) _{ \alpha \in \lambda } $
has no $D$-limit point.
\end{proof} 

\begin{remark} \label{pseudprod} 
Suppose that $\mathcal F$ in Theorem \ref{fprod}
is the family of all nonempty open subsets of $X$.
Then in (3) and (4) we cannot replace $\mathcal F^ \delta $
by the family $\mathcal G^ \delta $ of all nonempty open subsets of $X^ \delta $.
Indeed, if $X$ 
is a Tychonoff space, and
we take $\lambda= \omega $, 
then 
$\mathcal G^ \delta $-$\CAP^* _ \omega  $
for
$X^ \delta $
is equivalent to 
the pseudocompactness of $X^ \delta $.
However, \cite[Example 4.4]{GS} constructed a Tychonoff space 
all whose powers are pseudocompact,
but which for no uniform ultrafilter $D$ over $ \omega $ 
is $D$-pseudocompact. Thus, (3) $\Rightarrow $  (1)
becomes false, in general, if we choose $\mathcal G ^ \delta $
instead of $\mathcal F^ \delta $. 
\end{remark}    

\begin{remark} \label{res}  
In the particular case when $\lambda= \omega $ and $\mathcal F$ is the set of all singletons of $X$, the equivalence of (1), (3) and (4) in Theorem \ref{fprod}
is due to Ginsburg and Saks \cite[Theorem 2.6]{GS}, here
in equivalent form via Theorem \ref{equivcpn}.
See also \cite[Theorem 5.6]{SS} for a related result. 

More generally, when $\mathcal F$ is the set of all singletons of $X$, 
the equivalence of (1) and (3)  in Theorem \ref{fprod} is due to 
\cite[Theorem 6.2]{Sa}.
See also \cite[Corollary 2.15]{GF1}, \cite{Cprepr} and \cite[Theorem 3.4]{C}.
  \end{remark} 

Let us mention the special case of Theorem \ref{fprod}
dealing with $D$-pseudocompactness.

\begin{corollary} \label{pseudofprod} 
Let $X$ be a topological space, and $\lambda$ be an infinite  cardinal.
For every cardinal $ \delta $, let $\mathcal F^ \delta $ be either the family of all
members of $X^ \delta $ which are the products of $ \delta $ nonempty open sets of $X$,
or the family of the nonempty open sets of $X^ \delta $ in the box topology. (Thus, the former family is a base for the topology given by the latter family)  
 Then the following are equivalent.
\begin{enumerate} 
\item 
There exists some ultrafilter $D$ uniform over $\lambda$ such that
$X$ is $D$-pseudocompact.
\item 
There exists some ultrafilter $D$ uniform over $\lambda$ such that,
for every cardinal $\delta$,
every $\lambda$-indexed sequence of members of $\mathcal F^ \delta $ 
has some $D$-limit point in $X^ \delta $ ($X^ \delta $ is endowed with the Tychonoff topology).
\item
For every cardinal $\delta$,
in $X^ \delta $ (endowed with the Tychonoff topology),
every $\lambda$-indexed sequence of 
members of $\mathcal F^ \delta $
has a $\lambda$-complete accumulation point.
\item
Let $\delta= \min \{ 2 ^{2^ \lambda },   \kappa ^ \lambda \}$,
where $ \kappa $ is the weight 
  of $X$. 
In $X^ \delta $ (endowed with the Tychonoff topology),
every $\lambda$-indexed sequence  of
members of $\mathcal F^ \delta $
has a $\lambda$-complete accumulation point.
\item
(provided $\lambda$ is regular)
For every cardinal $\delta$,
$X^ \delta $ (endowed with the Tychonoff topology)
 is $\mathcal F ^ \delta $-$ [ \lambda , \lambda ]$-compact.
\item
(provided $\lambda$ is regular)
Suppose that $ \delta $ is a cardinal,  $( C _ \alpha ) _{ \alpha \in \lambda } $
is a family of closed sets of $X^ \delta $ (endowed with the Tychonoff topology)
and $C_ \alpha \supseteq C_ \beta $,
whenever $ \alpha \leq \beta  < \lambda $.
If, for every $ \alpha \in \lambda $, there exists   
$F \in \mathcal F^ \delta $ such that   
$  C_ \alpha \supseteq F$,
then  
$ \bigcap _{ \alpha \in \lambda }  C_ \alpha \not= \emptyset $.
\end{enumerate}   
\end{corollary}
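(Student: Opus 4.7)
The plan is to obtain Corollary \ref{pseudofprod} as a specialization of Theorem \ref{fprod} to the case where $\mathcal{F}$ is the family of all nonempty open subsets of $X$, together with a small sharpening of the bound in clause (4) based on the weight of $X$. I would first note that the two presentations of $\mathcal{F}^\delta$ allowed in the statement give equivalent notions: the family of products of $\delta$ nonempty open sets of $X$ is a base for the box topology on $X^\delta$, so by Remark \ref{opbase} applied inside $X^\delta$ endowed with the box topology, $\mathcal{F}^\delta$-$D$-compactness, $\mathcal{F}^\delta$-$\CAP^*_\lambda$, and $\mathcal{F}^\delta$-$[\lambda,\lambda]$-compactness are independent of that choice.

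Next I would transcribe (1) $\Leftrightarrow$ (2) $\Leftrightarrow$ (3) from the corresponding clauses of Theorem \ref{fprod}: condition (1) of the corollary is precisely $\mathcal{F}$-$D$-compactness of $X$ (i.e.\ $D$-pseudocompactness), and conditions (2) and (3) match conditions (2) and (3) of that theorem after one notes that (2) $\Leftrightarrow$ (3) is passed through Proposition \ref{ufacc}. For the regular $\lambda$ clauses, I would apply Theorem \ref{equivcpn} to $X^\delta$ with the family $\mathcal{F}^\delta$: (5) is condition (a) and (6) is condition (b) of that theorem.

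The only substantive point is the bound $\min\{2^{2^\lambda}, \kappa^\lambda\}$ in (4), which is sharper than what Theorem \ref{fprod}(4) gives directly (one only has $|\mathcal{F}| \leq 2^\kappa$, hence $|\mathcal{F}|^\lambda \leq 2^{\kappa \lambda}$). To get $\kappa^\lambda$ I would fix a base $\mathcal{B}$ for $X$ with $|\mathcal{B}| = \kappa$ and rerun the (4) $\Rightarrow$ (1) portion of the proof of Theorem \ref{fprod} using $\mathcal{B}$ in place of $\mathcal{F}$: enumerate the at most $\kappa^\lambda$ many $\lambda$-indexed sequences from $\mathcal{B}$ as $(F_{\beta,\alpha})_{\alpha \in \lambda}$, $\beta \in \delta$. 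Since each entry is a nonempty open set of $X$, the product sequence $(\prod_{\beta \in \delta} F_{\beta,\alpha})_{\alpha \in \lambda}$ lies in $\mathcal{F}^\delta$, so the hypothesis of (4) applies and yields a $\lambda$-complete accumulation point in $X^\delta$. Propositions \ref{ufacc} and \ref{prod}(a) then furnish a uniform ultrafilter $D$ over $\lambda$ such that each coordinate sequence has a $D$-limit point in $X$. Hence $X$ is $\mathcal{B}$-$D$-compact, which (since $\mathcal{F} \rhd \mathcal{B}$) is the same as $D$-pseudocompactness. The $2^{2^\lambda}$ alternative needs no adjustment, as the argument in Theorem \ref{fprod} counting ultrafilters over $\lambda$ carries over verbatim. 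The main subtlety I expect to navigate is this base-switch, though it reduces to a routine application of the $\rhd$-machinery once one observes that $\mathcal{B}$-sequences are automatically $\mathcal{F}$-sequences.
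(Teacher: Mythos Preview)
Your proposal is correct and follows essentially the same approach as the paper. The only cosmetic difference is that, for the $\kappa^\lambda$ bound in (4), the paper observes that $\mathcal{F}^\delta \rhd \mathcal{B}^\delta$ and $\mathcal{B}^\delta \rhd \mathcal{F}^\delta$ and then invokes Theorem \ref{fprod} wholesale with $\mathcal{B}$ in place of $\mathcal{F}$, whereas you reopen the (4) $\Rightarrow$ (1) argument and run it by hand with $\mathcal{B}$-sequences; both routes amount to the same base-switch idea.
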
  

\begin{proof}
In order to prove the equivalence 
of conditions (1)-(3), just take $\mathcal F$ in Theorem \ref{fprod}
to be the family of all nonempty sets of $X$, 
 to get the result when 
$\mathcal F^ \delta $ is the family of all
members of $X^ \delta $ which are the products of nonempty open sets of $X$.

In order to get the right bound in Condition (4),
recall that if $\mathcal B$ is a base (consisting of nonempty sets) of $X$,
then, by   
Remark \ref{opbase}, 
  $\mathcal F \rhd \mathcal B$ and
$\mathcal B \rhd \mathcal F$.
Notice also that
$\mathcal F^ \delta  \rhd \mathcal B^ \delta $ and
$\mathcal B^ \delta  \rhd \mathcal F^ \delta $ as well.
Thus, we can apply Theorem \ref{fprod} with
$\mathcal B $ in place of $  \mathcal F $, getting the right bound
in which 
$ |\mathcal B|=\kappa $ is the weight 
  of $X$.

If $\mathcal F'^ \delta $
is the family of the open sets of $X^ \delta $ in the box topology,
then, by Remark \ref{opbase}, trivially both $\mathcal F'^ \delta \rhd \mathcal F^ \delta $
and $\mathcal F'^ \delta \rhd \mathcal F^ \delta $, thus the corollary
holds for $\mathcal F'^ \delta $, too.

If $\lambda$ is regular, then Conditions (5) and (6) are equivalent to
 (3), by Theorem \ref{equivcpn}.
\end{proof}

When $\lambda$ is regular, we
 can use Theorem \ref{equivcpn} in order to get still more conditions equivalent to (3) and (4)
above. 
 

\section{Two cardinals transfer results} \label{sectr} 

We are now going to show 
that there are very non trivial cardinal transfer properties for
the conditions dealt with in Theorem \ref{fprod}.

Let $D$ be an ultrafilter over $\lambda$, and let 
$f: \lambda \to \mu$. The ultrafilter $f(D)$ over $\mu$
is defined by $Y \in f(D)$ if and only if $f ^{-1}(Y) \in D $.

 \begin{fact} \label{proj} 
Suppose that $X$ is a topological space, $\mathcal F$
is a family of subsets of $X$, 
$D$ is an ultrafilter over $\lambda$,
and $f: \lambda \to \mu$.

If 
$X$ is  $\mathcal F$-$D$-compact,
then  
$X$ is  $\mathcal F$-$f(D)$-compact,
\end{fact}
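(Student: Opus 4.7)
The plan is to pull back an arbitrary $\mu$-indexed family of members of $\mathcal F$ to a $\lambda$-indexed family via $f$, use $\mathcal F$-$D$-compactness of $X$ to get a $D$-limit point of the pulled-back family, and then verify directly from the definition of $f(D)$ that the same point serves as an $f(D)$-limit point of the original family.

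More concretely, I would start from an arbitrary family $(F_y)_{y \in \mu}$ of members of $\mathcal F$, and form the $\lambda$-indexed family $(G_z)_{z \in \lambda}$ given by $G_z = F_{f(z)}$. Each $G_z$ is a member of $\mathcal F$, so by $\mathcal F$-$D$-compactness of $X$ there exists $x \in X$ which is a $D$-limit point of $(G_z)_{z \in \lambda}$; that is, for every neighborhood $U$ of $x$, the set $\{z \in \lambda \mid G_z \cap U \neq \emptyset\}$ belongs to $D$.

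The key observation is then the set-theoretic identity
\[
 \{z \in \lambda \mid G_z \cap U \neq \emptyset\} \;=\; \{z \in \lambda \mid F_{f(z)} \cap U \neq \emptyset\} \;=\; f^{-1}\bigl(\{y \in \mu \mid F_y \cap U \neq \emptyset\}\bigr).
\]
So for every neighborhood $U$ of $x$, the preimage under $f$ of $\{y \in \mu \mid F_y \cap U \neq \emptyset\}$ lies in $D$, and by the very definition of the pushforward ultrafilter this means $\{y \in \mu \mid F_y \cap U \neq \emptyset\} \in f(D)$. Thus $x$ is an $f(D)$-limit point of $(F_y)_{y \in \mu}$, proving $\mathcal F$-$f(D)$-compactness.

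There is no real obstacle here: the argument is just the unravelling of two definitions (that of $D$-limit point and that of the pushforward ultrafilter) via the pullback construction. The only thing worth noting is that one does not need $f$ to be injective or surjective; even if $f$ is highly degenerate, the preimage equation above makes the argument go through verbatim. (In particular, $f(D)$ need not be uniform over $\mu$ even when $D$ is uniform over $\lambda$; but that is not part of the statement.)
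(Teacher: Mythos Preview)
Your proof is correct and is exactly the natural argument: pull back along $f$, apply $\mathcal F$-$D$-compactness, and use the defining equation $Y \in f(D) \Leftrightarrow f^{-1}(Y) \in D$. The paper does not supply a proof of this Fact at all (it is stated without proof as a routine observation), so there is nothing further to compare.
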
 

If $D$ is an ultrafilter over some set $Z$, and $\mu$
is a cardinal, $D$ is said to be $\mu$-decomposable if and only if
there exists a function $f: Z \to \mu$ such that $f(D)$
is uniform over $\mu$.

 The next corollary implies that if every ultrafilter uniform over $\lambda$ 
is $\mu$-decomposable and the conditions in
 Theorem \ref{fprod} hold for the cardinal $\lambda$, then they hold 
for the cardinal $\mu$, too. 

\begin{corollary} \label{transfer}
Suppose that $\lambda$ is an infinite cardinal, and $K$ 
 is a set of infinite cardinals,
and suppose that every uniform ultrafilter over $\lambda$ is
$\mu$-decomposable, for some $ \mu \in K$.

If $X$ is a topological space, 
$\mathcal F$ is a family of subsets of $X$ and
 one (and hence all) of the conditions in  Theorem \ref{fprod} 
 hold for 
$\lambda$, then there is 
$\mu \in K$ such that the  conditions in  Theorem \ref{fprod}
 hold 
when $\lambda$ is everywhere replaced by $\mu$. 

The same applies with respect to  Corollary  \ref{pseudofprod}.
\end{corollary}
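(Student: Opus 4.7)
The plan is to invoke condition (1) of Theorem \ref{fprod} at $\lambda$, apply the decomposability hypothesis to the specific ultrafilter it produces, and then transport $\mathcal F$-$D$-compactness across the decomposing map by means of Fact \ref{proj}. Finally, I re-enter Theorem \ref{fprod} at $\mu$ through its condition (1) to obtain all the other equivalents.

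In detail, first I would observe that, by the equivalence of (1)--(4) in Theorem \ref{fprod}, the hypothesis that one of the conditions of Theorem \ref{fprod} holds for $\lambda$ is equivalent to the existence of some ultrafilter $D$ uniform over $\lambda$ such that $X$ is $\mathcal F$-$D$-compact. Second, I would apply the decomposability hypothesis to this particular ultrafilter $D$: there exist $\mu \in K$ and a function $f\colon \lambda \to \mu$ such that $f(D)$ is uniform over $\mu$. Third, Fact \ref{proj} immediately gives that $X$ is $\mathcal F$-$f(D)$-compact, so condition (1) of Theorem \ref{fprod} holds for the cardinal $\mu$ and the ultrafilter $f(D)$. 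Fourth, re-applying the equivalences in Theorem \ref{fprod} to $\mu$ in place of $\lambda$, I conclude that all four of those conditions hold for $\mu$, which is exactly the desired statement.

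The extension to Corollary \ref{pseudofprod} requires no new argument: that corollary is nothing but the specialization of Theorem \ref{fprod} to the case in which $\mathcal F$ is the family of nonempty open subsets of $X$ (equivalently, by Remark \ref{opbase}, any base of nonempty open sets). Hence the same chain---invoke condition (1) for $\lambda$, decompose the witnessing ultrafilter, push it forward via $f$, and use Fact \ref{proj}---produces a $\mu \in K$ at which Corollary \ref{pseudofprod} (1) holds, and therefore, by that corollary, all its equivalent conditions hold.

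There is really no serious obstacle here: the decomposability hypothesis is tailored so as to produce, for the particular witnessing ultrafilter given by condition (1), a $\mu \in K$ and a map $f$ with $f(D)$ uniform over $\mu$, and Fact \ref{proj} does the rest. The only mild subtlety worth flagging is that the decomposability hypothesis must be applied after the witnessing $D$ has been chosen (so that the $\mu$ is allowed to depend on $D$); the argument does not need, and does not prove, that a single $\mu$ works simultaneously for all uniform ultrafilters on $\lambda$.
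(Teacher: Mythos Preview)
Your proposal is correct and follows essentially the same route as the paper: invoke condition (1) of Theorem \ref{fprod} to obtain a uniform ultrafilter $D$ over $\lambda$ with $X$ $\mathcal F$-$D$-compact, decompose $D$ via some $f$ so that $f(D)$ is uniform over some $\mu\in K$, and apply Fact \ref{proj} to conclude that condition (1) holds at $\mu$. Your remarks on Corollary \ref{pseudofprod} and on the dependence of $\mu$ on $D$ are also accurate.
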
 

\begin{proof}
Suppose that the
conditions in  Theorem \ref{fprod} hold for 
$\lambda$.
By  Condition \ref{fprod} (1),
there exists some ultrafilter $D$ uniform over $\lambda$ such that
$X$ is $\mathcal F$-$D$-compact. By assumption,
there exist $\mu \in K$ and $f: \lambda \to \mu$ such that  
$D'=f(D)$ is uniform over $\mu$. By Fact \ref{proj},
  $X$ is $\mathcal F$-$D'$-compact,
hence Condition \ref{fprod} (1) holds for the cardinal $\mu$. 
 \end{proof}  

There are many results asserting that, for some cardinal $\lambda$ 
 and some set $K$, the assumption in Corollary \ref{transfer} holds.
In order to state some of these  results  in a more concise way,
let us denote by 
$\lambda \stackrel{\infty\ }{\Rightarrow} K$, for $K$ 
a set of infinite cardinals, the statement
that the assumption in Corollary \ref{transfer} holds.
That is, 
$\lambda \stackrel{\infty\ }{\Rightarrow} K$
 means that 
every uniform ultrafilter over $\lambda$ is
$\mu$-decomposable, for some $ \mu \in K$.
In the case when
$K = \{ \mu \} $, we simply write  
$\lambda \stackrel{\infty\ }{\Rightarrow} \mu$ in place of
$\lambda \stackrel{\infty\ }{\Rightarrow} K$.
The reason for the superscript $ \infty$ is only to keep the notation consistent with
the notation used in former papers (e. g. \cite{nuotop}).
Notice that many conditions 
equivalent to $\lambda \stackrel{\infty\ }{\Rightarrow} K$
can be obtained from \cite[Part VI, Theorems 8 and 10]{nuotop},
by letting $ \kappa = 2^ \lambda $ there (equivalently, letting
$ \kappa $ be arbitrarily large) there.  

The following are trivial facts about the relation
$\lambda \stackrel{\infty\ }{\Rightarrow} K$.
If $ \lambda \in K$, then $\lambda \stackrel{\infty\ }{\Rightarrow} K$ holds. In particular, 
$\lambda \stackrel{\infty\ }{\Rightarrow}  \lambda $
holds. If $\lambda \stackrel{\infty\ }{\Rightarrow} K$ holds, and $K' \supseteq K$, then $\lambda \stackrel{\infty\ }{\Rightarrow} K'$ holds, too.

In the next Theorem we reformulate, according to the present terminology,
some of the results on decomposability of ultrafilters collected in \cite{mru}. 
In order to state the   theorem, we need to introduce some notational conventions. By $ \lambda ^{+n} $ we denote the $n ^{\rm th}$ successor of $\lambda$, that is,
$ \lambda ^{+n} = \lambda ^{\underbrace{+ \dots +} _{n \ {\rm times}} } $.
By $\beth_n(\lambda )$ we denote the $n^{\rm th} $ iteration of the power set of
$\lambda$; that is, $\beth_0(\lambda)=\lambda $, and
$\beth_{n+1}(\lambda)=2^{\beth_n(\lambda)}$.
As usual, $[\mu, \lambda ]$ denotes the interval $ \{ \nu \mid \mu \leq \nu \leq \lambda  \} $.

\begin{theorem} \label{ufdec}
The following hold.
\begin{enumerate} 

\item If $ \lambda $ is a  regular cardinal, then 
$ \lambda ^+ \stackrel{\infty\ }{\Rightarrow} \lambda $.

\item More generally, if $ \lambda $ is a regular cardinal, then
$ \lambda ^{+n} \stackrel{\infty\ }{\Rightarrow} \lambda $.

\item If $\lambda$ is a singular cardinal, then 
$ \lambda  \stackrel{\infty\ }{\Rightarrow} \cf \lambda $.

\item 
If $\lambda$  is a singular
cardinal, then
$\lambda^+ \stackrel{\infty\ }{\Rightarrow} \{\cf \lambda  \}\cup  K$, for every 
 set $K$  of regular cardinals $ < \lambda $ such that  $K$ is cofinal in $ \lambda $. 

\item 
$\nu^{\kappa^{+n}} \stackrel{\infty\ }{\Rightarrow} [\kappa,\nu^\kappa ]$.

\item 
If $m\geq 1$, then 
$\beth_m{(\kappa ^{+n}}) \stackrel{\infty\ }{\Rightarrow} [\kappa,2^\kappa ]$.

\item
If $\kappa $ is a strong limit cardinal, then
$\beth_m(\kappa^{+n} )
\stackrel{\infty\ }{\Rightarrow}
\{ \cf\kappa  \} \cup [ \kappa ',\kappa ) $, for every $ \kappa ' <\kappa$.

\item 
 If $\lambda$ is smaller than the first measurable cardinal 
(or no measurable cardinal exists), then 
$ \lambda  \stackrel{\infty\ }{\Rightarrow} \omega $.

\item More generally, for every infinite cardinal $\lambda$, we have that
$ \lambda  \stackrel{\infty\ }{\Rightarrow} \{  \omega\} \cup M$, where $M$ is the set of all measurable cardinals $\leq \lambda $.

\item 
 If there is no inner model with a measurable cardinal, and $\lambda \geq \mu$ are  infinite cardinals, then $\lambda \stackrel{\infty\ }{\Rightarrow} \mu$.

\end{enumerate}   

 In particular, Corollary 
\ref{transfer} applies in each of the above cases.
  \end{theorem}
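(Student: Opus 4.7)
The plan is that this theorem is essentially a dictionary translation of classical decomposability results into the abbreviation $\lambda \stackrel{\infty\ }{\Rightarrow} K$ introduced just before the statement; the author explicitly says as much in the lead-in sentence. My proof would therefore consist, for each of items (1)--(10), of identifying the matching theorem in \cite{mru} and verifying that the definition of $\mu$-decomposability used here (existence of $f:\lambda\to\mu$ with $f(D)$ uniform) coincides with the one used there (existence of a partition of $\lambda$ into $\mu$ blocks, none of which lies in $D$), which is a one-line verification. The trivial remarks noted right after the definition of the $\stackrel{\infty\ }{\Rightarrow}$ relation (namely $\lambda\stackrel{\infty\ }{\Rightarrow}\lambda$ and monotonicity in $K$) will be used tacitly throughout.

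A handful of items admit a self-contained argument that I would include for the reader's convenience. Item (3) is direct: given singular $\lambda$ and a strictly increasing cofinal sequence $(\lambda_\beta)_{\beta\in\cf\lambda}$, let $f:\lambda\to\cf\lambda$ send $\alpha$ to the least $\beta$ with $\alpha<\lambda_\beta$; for every uniform $D$ on $\lambda$, each initial fibre $f^{-1}([0,\beta])$ has cardinality $\lambda_\beta<\lambda$ and so lies outside $D$, making $f(D)$ uniform on $\cf\lambda$. Item (1) is the Prikry--Kanamori type result that a uniform ultrafilter on $\lambda^+$ is $\lambda$-decomposable, and item (2) follows by iterating (1) finitely often. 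Item (4) reduces to combining (3) with a slight refinement of (1) applied on the cofinal sequence witnessing singularity.

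The main obstacle is the deep set-theoretic content of items (5)--(7) and (10). Items (5)--(7) are consequences of the Balcar--Franek--Donder line of work, which analyses which targets $\mu$ must appear for ultrafilters on cardinals built by iterated exponentiation; in the writeup these would simply be cited from the appropriate numbered statements of \cite{mru}. Items (8)--(9) express the classical Keisler--Ketonen fact that below the first measurable every uniform ultrafilter is $\omega$-decomposable, while item (10) is Donder's theorem that in the absence of an inner model with a measurable cardinal, every uniform ultrafilter on $\lambda$ is $\mu$-decomposable for every infinite $\mu\le\lambda$. There is no realistic prospect of reproving these in this note, and I would flag them as quoted results with explicit page references to \cite{mru}.

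Finally, the closing sentence ``Corollary~\ref{transfer} applies in each of the above cases'' requires no work: by the very definition of $\lambda\stackrel{\infty\ }{\Rightarrow}K$, the hypothesis of Corollary~\ref{transfer} is literally what $\lambda\stackrel{\infty\ }{\Rightarrow}K$ asserts, so each conclusion of Theorem~\ref{ufdec} plugs into Corollary~\ref{transfer} with no further argument.
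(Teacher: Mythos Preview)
Your proposal is correct and matches the paper's own proof, which likewise dispatches (1)--(4) and (8)--(9) as classical facts (citing the comments after Problem~6.8 in \cite{mru}), derives (5)--(7) from \cite[Theorem~4.3 and Property~1.1(vii)]{mru}, and obtains (10) from Donder \cite[Theorem~4.5]{Do}. Your inclusion of self-contained arguments for (1)--(3) is a harmless addition; only your aside that (4) ``reduces to combining (3) with a slight refinement of (1)'' undersells the depth of the successor-of-singular case, but since both you and the paper ultimately treat it by citation this does not affect the validity of the plan.
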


 \begin{remark} \label{bymru}   
 Notice that, by \cite[Properties 1.1(iii),(x)]{mru}, and arguing as in 
\cite[Consequence 1.2]{mru}, the relation
 $\lambda \stackrel{\infty\ }{\Rightarrow} \mu$ is equivalent to
``every $\lambda$-decomposable ultrafilter  
is $\mu$-decomposable''.

Similarly, 
$\lambda \stackrel{\infty\ }{\Rightarrow} K$
is equivalent to
``every $\lambda$-decomposable ultrafilter 
is $\mu$-decomposable, for some $ \mu \in K$''.
 \end{remark}

\begin{proof}[Proof of Theorem \ref{ufdec}]
(1)-(4) and (8)-(9) are immediate from classical results about ultrafilters; see, e. g., 
the comments after Problem 6.8 in \cite{mru}.

(5)-(7) follow from \cite[Theorem 4.3 and Property 1.1(vii)]{mru}. 

(10)  is immediate from  \cite[Theorem 4.5]{Do}, by using 
 \cite[Properties 1.1 and Remark 1.5(b)]{mru}.
\end{proof}

By Remark \ref{bymru}, we get the following transitivity properties of the relation
$\lambda \stackrel{\infty\ }{\Rightarrow} K$.

\begin{proposition} \label{ufdec2}
The following hold.
\begin{enumerate} 

\item
 If $\lambda \stackrel{\infty\ }{\Rightarrow} \mu$
and $\mu \stackrel{\infty\ }{\Rightarrow} K$, then
$\lambda \stackrel{\infty\ }{\Rightarrow} K$.

\item
 More generally, suppose that $\lambda \stackrel{\infty\ }{\Rightarrow} K$
and, for every $\mu \in K$, it happens that
$\mu \stackrel{\infty\ }{\Rightarrow} H_\mu$,
for some set $H_\mu$ depending on $\mu$.
Then    $\lambda \stackrel{\infty\ }{\Rightarrow} \bigcup _{\mu \in K} H_\mu$.

\item 
Suppose that $\lambda \stackrel{\infty\ }{\Rightarrow} K$,
 $\mu \in K$, 
and $\mu \stackrel{\infty\ }{\Rightarrow} K'$,
for some set $K' \subseteq K$ such that $\mu \not\in K'$.
Then    $\lambda \stackrel{\infty\ }{\Rightarrow} K \setminus \{ \mu \} $ .
 
\item
 More generally, suppose that $\lambda \stackrel{\infty\ }{\Rightarrow} K$,
 $H \subseteq  K$ and, for every $\mu \in H$,  
it happens that $\mu \stackrel{\infty\ }{\Rightarrow}K\setminus H$.
Then    $\lambda \stackrel{\infty\ }{\Rightarrow} K \setminus H $ .
\end{enumerate}   
 \end{proposition}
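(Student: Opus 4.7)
The plan is to derive all four parts as essentially formal consequences of the characterization provided in Remark \ref{bymru}, namely that $\lambda \stackrel{\infty\ }{\Rightarrow} K$ is equivalent to the statement that every $\lambda$-decomposable ultrafilter is $\mu$-decomposable for some $\mu \in K$. Once this reformulation is in hand, each of (1)--(4) reduces to ``follow the arrows'' through successive decomposability witnesses.

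For (1), I would start from an arbitrary ultrafilter $D$ uniform over $\lambda$ (so in particular $D$ is $\lambda$-decomposable). The hypothesis $\lambda \stackrel{\infty\ }{\Rightarrow} \mu$ gives that $D$ is $\mu$-decomposable. But then Remark \ref{bymru} applied to $\mu \stackrel{\infty\ }{\Rightarrow} K$ yields that $D$ is $\nu$-decomposable for some $\nu \in K$. Since $D$ was arbitrary uniform over $\lambda$, this establishes $\lambda \stackrel{\infty\ }{\Rightarrow} K$. Part (2) is proved identically, except that the intermediate $\mu$ is not fixed in advance: given $D$ uniform over $\lambda$, one first obtains $\mu \in K$ with $D$ being $\mu$-decomposable, then from $\mu \stackrel{\infty\ }{\Rightarrow} H_\mu$ one obtains $\nu \in H_\mu$ with $D$ being $\nu$-decomposable; since $\nu \in \bigcup_{\mu' \in K} H_{\mu'}$, the conclusion follows.

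Parts (3) and (4) require a case split, but nothing more. For (4), take $D$ uniform over $\lambda$ and, using $\lambda \stackrel{\infty\ }{\Rightarrow} K$, choose $\mu \in K$ with $D$ being $\mu$-decomposable. If $\mu \in K \setminus H$, we are already finished. Otherwise $\mu \in H$, and the hypothesis $\mu \stackrel{\infty\ }{\Rightarrow} K \setminus H$ together with Remark \ref{bymru} produces some $\nu \in K \setminus H$ such that $D$ is $\nu$-decomposable. Either way, $D$ is $\nu$-decomposable for some $\nu \in K \setminus H$. Part (3) is just the special case $H = \{\mu\}$ (noting that $\mu \notin K'$ ensures $K' \subseteq K \setminus H$, so the hypothesis $\mu \stackrel{\infty\ }{\Rightarrow} K'$ implies $\mu \stackrel{\infty\ }{\Rightarrow} K \setminus H$ by the trivial monotonicity remark preceding Theorem \ref{ufdec}).

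There is no real obstacle here; the only subtlety is to remember to invoke the ``decomposable'' reformulation from Remark \ref{bymru} rather than work directly with uniform ultrafilters, since a $\mu$-decomposable ultrafilter on $\lambda$ need not be transported to an ultrafilter uniform over $\mu$ in a way that preserves the original domain $\lambda$. The reformulation keeps the ultrafilter $D$ fixed throughout and simply records successive decomposability properties of it, which is exactly what the chained transitivity requires.
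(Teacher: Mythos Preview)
Your proposal is correct and follows essentially the same approach as the paper: both rely on Remark \ref{bymru} to reformulate everything in terms of decomposability and then chase the arrows. The only cosmetic difference is that the paper deduces (4) from (2) by choosing $H_\mu = K \setminus H$ for $\mu \in H$ and $H_\mu = \{\mu\}$ for $\mu \in K \setminus H$, whereas you argue (4) directly with a case split; both then obtain (3) as the special case $H = \{\mu\}$.
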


\begin{proof}
(1) and (2) follow from Remark \ref{bymru}.

(4) is immediate from (2), by taking $H_\mu= K \setminus H$, if $\mu \in  H$,
and taking $H_\mu= \{ \mu \} $, if $\mu \in K \setminus H$, since, trivially 
$\mu \stackrel{\infty\ }{\Rightarrow} \mu $.

(3) is a particular case of (4), since $K' \subseteq K \setminus \{ \mu \} $.
\end{proof}

\begin{corollary} \label{corufdec} 
Suppose that $ \kappa  < \nu$ are infinite cardinals,
and that either $K= [\kappa, \nu]$, or  $K= [\kappa, \nu)$. 

(a) If $\lambda \stackrel{\infty\ }{\Rightarrow} K$, then
$\lambda \stackrel{\infty\ }{\Rightarrow} S$, where $S$ is the set
containing $\kappa$, containing all limit cardinals of $K$, and containing all
cardinals of $K$ which are successors of singular cardinals. 

(b) More generally, if $\lambda \stackrel{\infty\ }{\Rightarrow} K$, then
$\lambda \stackrel{\infty\ }{\Rightarrow} L$, where $L$ is the set
of all $  \mu \in K$ such that  either 
\begin{enumerate} 
\item
$\mu= \kappa$, or
\item
$\mu$ is singular and  $\cf \mu < \kappa $, or
\item
$\mu=\varepsilon ^+$, for some singular $\varepsilon $ such that
  $\cf \varepsilon  < \kappa $, or 
\item
$\mu$ is weakly inaccessible.
  \end{enumerate} 

In particular, the above statements can be used to refine
Theorem \ref{ufdec}(5)-(6). 
\end{corollary}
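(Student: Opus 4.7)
The plan is to apply Proposition \ref{ufdec2}(4): taking $H := K \setminus L$ in part (b) or $H := K \setminus S$ in part (a), it suffices to verify $\mu \stackrel{\infty\ }{\Rightarrow} L$ (respectively $\mu \stackrel{\infty\ }{\Rightarrow} S$) for every $\mu \in H$, after which the hypothesis $\lambda \stackrel{\infty\ }{\Rightarrow} K$ immediately yields the conclusion.

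For part (a) I would argue directly, with no induction. Each $\mu \in K \setminus S$ must be a successor of a regular cardinal (by the definition of $S$), so write $\mu = \mu_1^+$ with $\mu_1$ regular. If $\mu_1 \notin S$, then for the same reason $\mu_1 = \mu_2^+$ with $\mu_2$ regular, and so on. The strictly decreasing sequence $\mu > \mu_1 > \mu_2 > \dots$ of cardinals must terminate after finitely many steps at some $\mu_n \in S$, so $\mu = \mu_n^{+n}$ with $\mu_n$ regular, and Theorem \ref{ufdec}(2) gives $\mu \stackrel{\infty\ }{\Rightarrow} \mu_n \in S$.

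For part (b) I would use induction on $\mu \in H$. Unpacking the definition of $L$ shows that each $\mu \in H$ falls into one of three cases:
\textbf{(A)} $\mu$ singular with $\cf \mu \geq \kappa$;
\textbf{(B1)} $\mu = \varepsilon^+$ with $\varepsilon$ regular and $\varepsilon \geq \kappa$;
\textbf{(B2)} $\mu = \varepsilon^+$ with $\varepsilon$ singular and $\cf \varepsilon \geq \kappa$.
Theorem \ref{ufdec} parts (3), (1), and (4) will respectively supply a reduction $\mu \stackrel{\infty\ }{\Rightarrow} R_\mu$ where $R_\mu$ is $\{ \cf \mu \}$, $\{ \varepsilon \}$, or $\{ \cf \varepsilon \} \cup K'$ for a suitable set $K'$ of regular cardinals $< \varepsilon$ cofinal in $\varepsilon$. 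In each case every $\tau \in R_\mu$ will satisfy $\kappa \leq \tau < \mu$, hence $\tau \in K$; then $\tau \in L$ gives $\tau \stackrel{\infty\ }{\Rightarrow} L$ trivially (using that $\tau \in L$ implies $\tau \stackrel{\infty\ }{\Rightarrow} L$), while $\tau \in H$ allows the inductive hypothesis to conclude $\tau \stackrel{\infty\ }{\Rightarrow} L$. Proposition \ref{ufdec2}(2) then gives $\mu \stackrel{\infty\ }{\Rightarrow} L$, closing the induction.

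The main obstacle will be case (B2), where the set $K'$ must be chosen so that all its elements lie in $[\kappa, \varepsilon)$ (so that they are in $K$); this is possible precisely because $\cf \varepsilon \geq \kappa$ ensures that cofinally many regular cardinals of $\varepsilon$ exceed $\kappa$. This also explains why $L$ must include the successors of singular $\varepsilon$ with $\cf \varepsilon < \kappa$: such $\varepsilon^+$ cannot be reduced any further within $K$ by the tools of Theorem \ref{ufdec}. Part (a) could also be derived from part (b) via the easy inclusion $L \subseteq S$, but the direct argument above avoids the induction.
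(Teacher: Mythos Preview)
Your proposal is correct and follows essentially the same route as the paper: reduce via Proposition~\ref{ufdec2}(4) to showing $\mu \stackrel{\infty\ }{\Rightarrow} L$ for each $\mu \in H = K \setminus L$, then case-split using Theorem~\ref{ufdec}(1), (3), (4) and close by induction (the paper uses the equivalent least-counterexample form). Two minor differences worth noting: the paper derives (a) from (b) rather than giving your direct argument via Theorem~\ref{ufdec}(2), and in case (B2) your use of Proposition~\ref{ufdec2}(2) with the \emph{set} $R_\mu = \{\cf\varepsilon\} \cup K'$ is actually cleaner than the paper's phrasing, which speaks of a single $\mu'$ even though Theorem~\ref{ufdec}(4) only yields a set.
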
 

\begin{proof}
Clearly, (a) follows from (b).
In order to prove (b),
let $H=K\setminus L$,  
thus $L=K \setminus H$.

By Proposition \ref{ufdec2}(4), it is enough to   show that if $\mu \in H$, then 
$\mu \stackrel{\infty\ }{\Rightarrow} L$.

This is trivial if $H= \emptyset $. 
Otherwise, 
suppose by contradiction that there is some 
$\mu \in H$ such that 
$\mu \stackrel{\infty\ }{\Rightarrow} L$ fails.
Let $\mu_0$ be the least such $\mu$.

We now show that there is some $\mu'< \mu_0$ such that
$\mu'\geq \kappa $ and $\mu_0 \stackrel{\infty\ }{\Rightarrow} \mu'$.
This follows from Theorem \ref{ufdec}(1), if $\mu_0$ is the successor of some regular cardinal,
since $\mu_0 > \kappa \not\in H$, by Clause (1).
The existence of $\mu'$ 
follows from Theorem \ref{ufdec}(4), if $\mu_0= \varepsilon ^+$ with $\varepsilon $ singular
such that  $\cf \varepsilon \geq \kappa $. Finally,
the existence of $\mu'$ 
follows from Theorem \ref{ufdec}(3), if $\mu_0$ is singular and $\cf \mu_0 \geq \kappa $.
By Clauses (2)-(4), no other possibility can occur for $\mu_0$, since $\mu_0 \in H $, that is,
$\mu_0 \not \in L$.  

Since $ \kappa \leq \mu' < \mu_0$, then 
$\mu' \stackrel{\infty\ }{\Rightarrow} L$.
This is trivial if $\mu' \in L$; and follows from the minimality
of $\mu_0$, if  $\mu' \not\in L$, which means $\mu'\in H= K\setminus L $.

From
$\mu_0 \stackrel{\infty\ }{\Rightarrow} \mu'$,
and  
$\mu' \stackrel{\infty\ }{\Rightarrow} L$,
we infer
$\mu_0 \stackrel{\infty\ }{\Rightarrow} L$,
by applying Proposition \ref{ufdec2}(1).
We have reached the desired contradiction.
\end{proof} 

Some more  results about the relation
$\lambda \stackrel{\infty\ }{\Rightarrow} K$ follow from
results in \cite{mru}. See \cite{dec}.
See also the comments after \cite[Problem 
6.8]{mru}, in particular, for some open problems 
concerning transfer of decomposability for ultrafilters.

\smallskip

In the particular case when $\mathcal F$ is the set of all singletons, many versions
of Corollary \ref{transfer}  are known, 
 and are usually
stated by means of conditions involving $ [ \lambda , \lambda ]$-compactness
(for regular cardinals, the conditions are equivalent by Theorem  \ref{equivcpn}).
Caicedo \cite{Cprepr} and \cite[Corollary 1.8(ii)]{C}  
proved, among other,  that every productively 
$ [ \lambda ^+, \lambda ^+] $-compact family of
topological spaces is productively $ [ \lambda , \lambda ] $-compact.
More generally, among other, we proved in \cite[Theorem 16]{topappl} that if a product
of topological spaces is
$ [ \lambda ^+, \lambda ^+] $-compact, then all but at most 
$\lambda$ factors are
 $ [ \lambda , \lambda ] $-compact.
Results related to Corollary \ref{transfer}
appear in \cite{Cprepr,C,topproc} and    
\cite[Corollary 4.6]{mru}: generally, they deal with $( \lambda ,\mu)$-regularity
of ultrafilters, which is a notion tightly connected to decomposability,
since, for $\lambda$ a regular cardinal, an ultrafilter is 
$\lambda$-decomposable if and only if it is  $( \lambda , \lambda )$-regular.    
Stronger related results appear in \cite{nuotop}, dealing also
with equivalent notions from Model Theory and Set Theory: in particular, see 
\cite[Part VI, Theorem 8]{nuotop}.
Even in the case when $\mathcal F$ is the set of all singletons, 
some consequences of Theorem \ref{ufdec} and 
Corollaries \ref{corufdec} and \ref{transfer} 
appear to be new, particularly, in the case of singular cardinals.

\smallskip
 
Already the special case $ \mu= \omega $ 
for pseudocompactness of Corollary \ref{transfer}  
appears to have some interest.

\begin{corollary} \label{corpiuspec}
Suppose that $\lambda$ is an infinite cardinal,
and suppose that every uniform ultrafilter over $\lambda$ is
$ \omega $-decomposable
(for example, this happens when either $\cf \lambda = \omega $, or
when $\lambda$ is less than the first measurable cardinal, or if there exists no inner model with a measurable cardinal).

Suppose that $X$ is a topological space
satisfying one of the conditions in Corollary \ref{pseudofprod}.
Then $X$ is $D$-pseudocompact, for some ultrafilter $D$ 
uniform over $ \omega $. In particular, if $X$ is Tychonoff, then
$X$ is pseudocompact, and, furthermore, 
all powers of $X$
are pseudocompact.
 \end{corollary}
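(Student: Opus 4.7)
The plan is to treat the statement as an essentially immediate application of Corollary \ref{transfer} with $K=\{\omega\}$. The hypothesis that every uniform ultrafilter over $\lambda$ is $\omega$-decomposable is precisely the relation $\lambda \stackrel{\infty\ }{\Rightarrow} \omega$ in the notation of Section \ref{sectr}. Since $X$ is assumed to satisfy one of the equivalent conditions of Corollary \ref{pseudofprod} at the cardinal $\lambda$, the final clause of Corollary \ref{transfer} (\emph{the same applies with respect to Corollary \ref{pseudofprod}}) transfers that condition to the cardinal $\omega$. In particular, Condition (1) of Corollary \ref{pseudofprod} yields an ultrafilter $D$ uniform over $\omega$ such that $X$ is $D$-pseudocompact, which is the first assertion.

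Next I would verify the three parenthetical special cases in which the hypothesis $\lambda \stackrel{\infty\ }{\Rightarrow} \omega$ holds. Each is a direct quotation from Theorem \ref{ufdec}: the case $\cf\lambda=\omega$ is item~(3) (the case $\lambda=\omega$ being trivial); the case $\lambda$ below the first measurable is item~(8); and the case ``no inner model with a measurable cardinal'' is item~(10) applied with $\mu=\omega$.

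To obtain the final sentence, assume $X$ is Tychonoff. Since $D$ is uniform and hence free over $\omega$, any $D$-limit point of a countable sequence $(O_n)_{n\in\omega}$ of nonempty open sets is automatically a cluster point of that sequence, so $X$ satisfies $\mathcal F$-$\CAP^*_\omega$ for $\mathcal F$ the family of nonempty open sets of $X$; by Glicksberg's theorem recalled after Definition \ref{capf}, this is equivalent to pseudocompactness of $X$. For the powers, I would invoke the direction $(1)\Rightarrow(3)$ of Corollary \ref{pseudofprod} at the cardinal $\omega$: for every cardinal $\delta$, every $\omega$-indexed sequence of members of $\mathcal F^\delta$ has a $\omega$-complete accumulation point in $X^\delta$ under the Tychonoff topology. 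Because every basic Tychonoff-open set of $X^\delta$ is itself a member of $\mathcal F^\delta$, this yields $\CAP^*_\omega$ for the family of nonempty open sets of $X^\delta$, and a second application of Glicksberg delivers pseudocompactness of $X^\delta$.

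I do not anticipate a serious obstacle, since the machinery (Corollary \ref{transfer} together with Theorem \ref{ufdec}) has been erected precisely for statements of this kind. The only point that requires a moment of care is the passage, in the last step, from the auxiliary base $\mathcal F^\delta$ to the full family of nonempty Tychonoff-open sets of $X^\delta$; this is legitimate because $\mathcal F^\delta$ refines the latter family in the sense of the relation $\rhd$ (compare Remark \ref{opbase}), so that $\CAP^*_\omega$ automatically passes between them.
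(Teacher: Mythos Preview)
Your proof is correct and follows essentially the same route as the paper, which is extremely terse (``Immediate from Remark \ref{omegaps}''): the intended argument is exactly the application of Corollary \ref{transfer} with $K=\{\omega\}$ (implicit from the surrounding context), followed by the Glicksberg equivalence recorded in Remark \ref{omegaps}. You have simply spelled out the details that the paper leaves to the reader, including the verification of the parenthetical hypotheses via Theorem \ref{ufdec} and the $\rhd$-comparison needed to pass from $\mathcal F^\delta$ to the Tychonoff-open sets of $X^\delta$.
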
 

\begin{proof}
Immediate from Remark \ref{omegaps}.
\end{proof}
 
Garcia-Ferreira \cite{GF} contains results related to Corollary \ref{corpiuspec}.
In particular, \cite{GF}  analyzes the relationship between $D$-(pseudo)compactness
and $D'$-(pseudo)compactness for various ultrafilters $D$, $D'$.

\section{$ [ \mu,\lambda ]$-compactness relative to a family $\mathcal F$} \label{mlrelf} 

We can generalize the notion of $ [ \mu,\lambda ]$-compactness
in another direction.

\begin{definition} \label{fcpn2}
If $X$ is a topological space, and
$\mathcal G$ is a family of subsets of $X$,
we say that $X$ is
$ [\mu, \lambda ]$-\emph{compact relative to} $\mathcal G$  
if and only if the following holds.

For every family $( G _ \alpha ) _{ \alpha \in \lambda } $
 of elements of $\mathcal G$, if,
for every $Z \subseteq \lambda $ with $ |Z|< \mu$,
$ \bigcap _{ \alpha \in Z}  G_ \alpha \not= \emptyset $,
then  
$ \bigcap _{ \alpha \in \lambda }  G_ \alpha \not= \emptyset $.
\end{definition}   

The usual notion of 
$ [ \mu, \lambda ]$-compactness
can  be obtained from the above definition
in the particular case when $\mathcal G$ is the family
of all closed sets of $X$.

If $\mathcal G$ is the 
family of all zero sets of some Tychonoff space $X$, 
then $X$ is
$ [\omega , \lambda ]$-compact relative to $\mathcal G$ 
if and only if $X$ is $\lambda$-pseudocompact.
See, e. g., \cite{GF, St}
 for results about  
$\lambda$-pseudocompactness, equivalent formulations, and further references.
Notice that \cite{GF} shows that it is possible, under some set-theoretical assumptions, 
 to construct a space which is not $ \omega _1$-pseudocompact, but which is 
$D$-pseudocompact, for some ultrafilter  $D$ uniform over $ \omega _1$. 

\begin{proposition} \label{prlmcp}
Suppose that $X$ is a topological space, and
$\mathcal G$ is a family of  subsets of $X$. Then the following are equivalent.

(a)
$X$ is
$ [ \mu, \lambda ]$-compact relative to $\mathcal G$.  

(b)
$X$ is
$ [ \kappa  , \kappa ]$-compact relative to $\mathcal G$, for every 
$ \kappa $ with $\mu \leq \kappa \leq \lambda $.    
 \end{proposition}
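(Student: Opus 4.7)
The plan is to prove the two implications separately, using standard cardinality/padding tricks that work uniformly for any family $\mathcal{G}$ of subsets; no topological content beyond Definition \ref{fcpn2} is needed.

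For (b) $\Rightarrow$ (a), I would argue by contradiction. Suppose $(G_\alpha)_{\alpha\in\lambda}$ is a $\lambda$-indexed family in $\mathcal{G}$ witnessing the failure of $[\mu,\lambda]$-compactness relative to $\mathcal{G}$: every subfamily indexed by $Z\subseteq\lambda$ with $|Z|<\mu$ has nonempty intersection, but $\bigcap_{\alpha\in\lambda}G_\alpha=\emptyset$. Let $\kappa$ be the least cardinal for which some sub-indexed family $(G_{\alpha})_{\alpha\in Z^*}$ with $|Z^*|=\kappa$ has empty intersection. Since families of size $<\mu$ have nonempty intersection, $\kappa\ge\mu$, and since the full family witnesses emptiness, $\kappa\le\lambda$; thus $\mu\le\kappa\le\lambda$. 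Reindex $Z^*$ by $\kappa$ to obtain a $\kappa$-indexed family in $\mathcal{G}$. By the minimality of $\kappa$, every subfamily of size $<\kappa$ has nonempty intersection, yet the total intersection is empty. This contradicts $[\kappa,\kappa]$-compactness relative to $\mathcal{G}$.

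For (a) $\Rightarrow$ (b), fix $\kappa$ with $\mu\le\kappa\le\lambda$ and let $(G_\alpha)_{\alpha\in\kappa}$ be a $\kappa$-indexed family in $\mathcal{G}$ such that every subfamily of size $<\kappa$ has nonempty intersection. Pad it to a $\lambda$-indexed family $(H_\alpha)_{\alpha\in\lambda}$ by setting $H_\alpha=G_\alpha$ for $\alpha<\kappa$ and $H_\alpha=G_0$ for $\kappa\le\alpha<\lambda$. Clearly $\bigcap_{\alpha\in\lambda}H_\alpha=\bigcap_{\alpha\in\kappa}G_\alpha$. For any $Z\subseteq\lambda$ with $|Z|<\mu\le\kappa$, the corresponding intersection equals $\bigcap_{\alpha\in Z'}G_\alpha$ for some $Z'\subseteq\kappa$ with $|Z'|\le|Z|+1<\kappa$ (using that $\kappa$ is infinite), so it is nonempty by hypothesis. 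Hence $[\mu,\lambda]$-compactness relative to $\mathcal{G}$ applies to $(H_\alpha)$, giving $\bigcap_{\alpha\in\kappa}G_\alpha\neq\emptyset$.

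There is no serious obstacle here; the only delicate points are the mild size-bookkeeping in the padding step (needing $\kappa$ infinite so that $|Z|+1<\kappa$) and the fact that the minimality argument in (b) $\Rightarrow$ (a) must use a cardinal lying in the closed interval $[\mu,\lambda]$, which is automatic from the definitions. Since repetitions among the $G_\alpha$ are allowed in Definition \ref{fcpn2}, there is no issue in reindexing arbitrary subfamilies by their cardinality.
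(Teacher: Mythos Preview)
Your proof is correct and is exactly the standard argument the paper has in mind: the paper's own proof merely says ``Similar to the proof of the classical result for $[\mu,\lambda]$-compactness'' and cites \cite[Proposition 8]{topappl}, and what you wrote is precisely that classical padding/minimality argument transplanted verbatim to the $\mathcal G$-relative setting. There is nothing to add.
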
  

\begin{proof}
Similar to the proof of the classical result for $ [ \mu, \lambda ]$-compactness, see,
e. g.,
\cite[Proposition 8]{topappl}. 
 \end{proof} 

There is some connection between the compactness properties
introduced in Definitions \ref{fcpn} and \ref{fcpn2}.
In order to deal with the relationship between the two
properties, it is convenient to introduce a common generalization.

\begin{definition} \label{fgcpn}
If $X$ is a topological space, $\mathcal F$ and
$\mathcal G$  are families of  subsets of $X$,
we say that $X$ is
$\mathcal F$-$ [\mu, \lambda ]$-\emph{compact relative to} $\mathcal G$  
if and only if the following holds.

For every family $( G _ \alpha ) _{ \alpha \in \lambda } $
 of elements of $\mathcal G$, if,
for every $Z \subseteq \lambda $ with $ |Z|< \mu$,
there exists $F \in\mathcal F$ such that 
$ \bigcap _{ \alpha \in Z}  G_ \alpha \supseteq F $,
then  
$ \bigcap _{ \alpha \in \lambda }  G_ \alpha \not= \emptyset $.
\end{definition}   
 
Thus, $\mathcal F$-$ [\mu, \lambda ]$-compactness
is
$\mathcal F$-$ [\mu, \lambda ]$-compactness relative to $\mathcal G$,
when $\mathcal G$ is the family of all closed subsets of $X$.

On the other hand, $ [\mu, \lambda ]$-compactness relative to $\mathcal G$
 is $\mathcal F$-$ [\mu, \lambda ]$-compactness relative to $\mathcal G$,
when $\mathcal F$ is the set of all singletons of $X$.

\begin{proposition} \label{cpcp}
Suppose that $\lambda$ and $\mu$ are infinite cardinals,
 and let $ \kappa = \sup \{ \lambda ^{ \mu'} \mid \mu'< \mu \} $. 
Suppose that $X$ is a topological space, and
$\mathcal F$ is a family of  subsets of $X$. 
Let $\mathcal F^*$
($\mathcal F^* _{\leq \kappa } $, resp.)
be the family of all subsets of $X$
which are the closure of the union of
some family of ($\leq \kappa $, resp.) sets in $\mathcal F$.
Then:
\begin{enumerate}
\item   
The following conditions are equivalent.

(a) $X$ is $\mathcal F$-$ [\mu, \lambda ]$-compact.

(b) $X$ is $\mathcal F$-$ [\mu, \lambda ]$-compact relative to $\mathcal F^*$.

(c) $X$ is $\mathcal F$-$ [\mu, \lambda ]$-compact relative to $\mathcal F^* _{\leq \kappa  } $.
\item
Suppose in addition that all members of $\mathcal F$ are  nonempty. 
If $X$ is  $ [\mu, \lambda ]$-compact relative to $\mathcal F^* _{\leq \kappa } $,
then 
$X$ is $\mathcal F$-$ [\mu, \lambda ]$-compact.
\end{enumerate} 
 \end{proposition}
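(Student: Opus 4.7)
The strategy for both parts is to reduce an arbitrary family of closed sets satisfying the premise of $\mathcal F$-$[\mu,\lambda]$-compactness to a family of $\mathcal F^*_{\leq\kappa}$-elements by a single construction. Specifically, starting from closed sets $(C_\alpha)_{\alpha\in\lambda}$ such that for every $Z\subseteq\lambda$ with $|Z|<\mu$ there exists $F\in\mathcal F$ with $F\subseteq\bigcap_{\alpha\in Z}C_\alpha$, I will fix a choice function $Z\mapsto F_Z\in\mathcal F$ with $F_Z\subseteq\bigcap_{\alpha\in Z}C_\alpha$, and for each $\alpha\in\lambda$ define
\[
G_\alpha \;=\; \overline{\bigcup\{F_Z\mid \alpha\in Z,\ |Z|<\mu\}}.
\]
Since $C_\alpha$ is closed and contains every such $F_Z$, one has $G_\alpha\subseteq C_\alpha$. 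By standard cardinal arithmetic the number of $Z\subseteq\lambda$ with $|Z|<\mu$ is at most $\sup_{\mu'<\mu}\lambda^{\mu'}=\kappa$ (in the nontrivial case $\mu\leq\lambda$, so that $\kappa\geq\mu$), hence $G_\alpha$ is the closure of a union of at most $\kappa$ members of $\mathcal F$, that is, $G_\alpha\in\mathcal F^*_{\leq\kappa}$.

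For part (1), the implications (a)$\Rightarrow$(b)$\Rightarrow$(c) are immediate, since $\mathcal F^*_{\leq\kappa}\subseteq\mathcal F^*$ and every member of $\mathcal F^*$ is a closed set of $X$. For the main direction (c)$\Rightarrow$(a), I apply the construction: for every $Z\subseteq\lambda$ with $|Z|<\mu$ and every $\alpha\in Z$, the set $F_Z$ appears in the union defining $G_\alpha$, hence $F_Z\subseteq\bigcap_{\alpha\in Z}G_\alpha$. Thus the family $(G_\alpha)_{\alpha\in\lambda}$ satisfies the premise of $\mathcal F$-$[\mu,\lambda]$-compactness relative to $\mathcal F^*_{\leq\kappa}$, so (c) yields $\bigcap_{\alpha\in\lambda}G_\alpha\neq\emptyset$, and this intersection is contained in $\bigcap_{\alpha\in\lambda}C_\alpha$.

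For part (2), I run the same construction, but this time I invoke (plain) $[\mu,\lambda]$-compactness relative to $\mathcal F^*_{\leq\kappa}$, whose premise demands only that $\bigcap_{\alpha\in Z}G_\alpha\neq\emptyset$ for $|Z|<\mu$. This is precisely where the extra hypothesis that the members of $\mathcal F$ are nonempty enters: since $F_Z\subseteq\bigcap_{\alpha\in Z}G_\alpha$ and $F_Z\neq\emptyset$, the required partial intersections are nonempty. Consequently $\bigcap_{\alpha\in\lambda}G_\alpha\neq\emptyset$, and again this gives $\bigcap_{\alpha\in\lambda}C_\alpha\neq\emptyset$.

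The construction itself is the heart of the argument, and there is no genuine obstacle; the only point requiring modest care is the cardinality bound on the set of $Z$'s, which rests on the elementary inequality $\sum_{\mu'<\mu}\lambda^{\mu'}\leq\kappa$ and on handling the trivial edge cases (finite $\mu$, or $\mu>\lambda$) separately.
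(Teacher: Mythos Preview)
Your proof is correct and follows essentially the same route as the paper: the key construction $G_\alpha=\overline{\bigcup\{F_Z\mid \alpha\in Z,\ |Z|<\mu\}}$ is exactly the paper's $C'_\alpha$, and the cardinality bound and the verification of the premises for $(G_\alpha)$ are handled just as in the paper (indeed, you spell out the verification that $F_Z\subseteq\bigcap_{\alpha\in Z}G_\alpha$ more explicitly). The only cosmetic difference is in part~(2): the paper first observes abstractly that, when all members of $\mathcal F$ are nonempty, $[\mu,\lambda]$-compactness relative to any $\mathcal G$ implies $\mathcal F$-$[\mu,\lambda]$-compactness relative to $\mathcal G$, and then invokes (1)(c)$\Rightarrow$(a); you instead rerun the construction directly --- same content, slightly different packaging.
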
  

\begin{proof} 
In (1), the implications (a)  $\Rightarrow $ (b) $\Rightarrow $  (c) are trivial.

In order to show that (c) $\Rightarrow $  (a)  holds, let 
$( C _ \alpha ) _{ \alpha \in \lambda } $ be a family of 
 closed sets of $X$ such that, 
for every $Z \subseteq \lambda $ with $ |Z|< \mu$,
there exists $F_ Z \in \mathcal F$ such that   
$ \bigcap _{ \alpha \in Z}  C_ \alpha \supseteq F_Z$.

For $ \alpha \in \lambda $, let 
$C' _ \alpha $ be the closure of
$ \bigcup _{ \alpha \in Z}  F_Z$.
Clearly, for every $ \alpha \in \lambda $, we have
 $C_ \alpha \supseteq C' _ \alpha $.
Since there are 
$\kappa$ subsets of $\lambda$ of cardinality
$< \mu$, 
that is, we can choose $Z$ in $\kappa$-many ways, we have that 
each $C'_ \alpha $ is the closure of the union of $\leq \kappa$
  elements from $\mathcal F$. Thus we can apply
(c) in order to get 
$ \bigcap _{ \alpha \in \lambda } C_ \alpha \supseteq 
 \bigcap _{ \alpha \in \lambda } C'_ \alpha
\not= \emptyset$. 
 
(2) is immediate from (1) (c) $\Rightarrow $  (a), since if $\mathcal F$  is a family of \emph{nonempty}
subsets of $X$, then  $ [\mu, \lambda ]$-compactness relative to
some family $\mathcal G$ implies
$\mathcal F$-$ [\mu, \lambda ]$-compactness relative to $\mathcal G$.
\end{proof} 

\begin{remark} \label{cfslm}
The value $ \kappa = \sup \{ \lambda ^{ \mu'} \mid \mu'< \mu \} $ in
Proposition \ref{cpcp} can be improved to $\kappa=$ the cofinality of the 
partial order $S_ \mu( \lambda )$ (see \cite{mru}).
 \end{remark}

\bibliographystyle{plain}

\begin{thebibliography}{KaMa}

\bibitem[AlUr]{AU} P. Alexandroff, P. Urysohn, {\em M\'emorie sur les \'espaces topologiques compacts}, Ver. Akad. Wetensch. Amsterdam \textbf{14} (1929), 1-96. 

\smallskip 

\bibitem[Ca1]{Cprepr} X. Caicedo, {\em On productive $[\kappa,\lambda]$-compactness, or  the Abstract Compactness Theorem revisited},  manuscript (1995). 

\smallskip 


\bibitem[Ca2]{C} X. Caicedo, {\em The Abstract Compactness Theorem Revisited}, in {\em Logic and Foundations of Mathematics (A. Cantini et al. editors),} Kluwer Academic Publishers  (1999), 131--141. 


\smallskip 




\bibitem[Do]
 {Do} H.-D. Donder,  \emph{Regularity of ultrafilters and the core model}, Israel
J. Math. {\bf 63}, 289--322 (1988).

\smallskip 



\bibitem
 [Ga1]{GF1} S. Garcia-Ferreira, \emph{Some remarks on initial $\alpha$-compactness, $<\alpha$-boundedness
   and $p$-compactness},
   Topology Proc. {\bf 15}  (1990), 11--28.

\smallskip 


\bibitem
 [Ga2]{GF} S. Garcia-Ferreira, \emph{On two generalizations of 
pseudocompactness},
Topology Proc. {\bf 24} (Proceedings of the 14$^\text{th}$
   Summer Conference on General Topology and its Applications
  Held at Long Island University, Brookville, NY, August 4--8,
1999) (2001), 149--172. 

\smallskip 



\bibitem
[GiSa]{GS} J. Ginsburg and V. Saks,
\emph{Some applications of ultrafilters in topology},
Pacific J. Math. {\bf  57} (1975), 403--418.

\smallskip 


\bibitem 
[Gl] {Gl} 
I. Glicksberg, \emph{Stone-\v Cech compactifications of products}, Trans. Amer. Math. Soc
{\bf 90} (1959),  369--382 .

\smallskip 






\bibitem[Li1] {topproc} P. Lipparini,  {\em Productive $[\lambda,\mu ]$-compactness and regular ultrafilters}, Topology Proceedings \textbf{21} (1996), 161--171.
 

\smallskip 


\bibitem[Li2]{topappl} P. Lipparini,  {\em Compact factors in finally compact products of topological spaces}, Topology and its Applications \textbf{153} (2006), 1365--1382.

\smallskip 


\bibitem[Li3]{nuotop} P. Lipparini,  {\em Combinatorial and model-theoretical principles related to 
regularity of ultrafilters and compactness of topological spaces. I}, 
arXiv:0803.3498; {\em II.}:0804.1445; {\em III.}:0804.3737; {\em IV.}:0805.1548  
(2008); {\em V.}:0903.4691; {\em VI.}:0904.3104  (2009).

\smallskip 


\bibitem[Li4]{mru} P. Lipparini,  \emph{More   on regular and decomposable ultrafilters in ZFC}, accepted by Mathematical Logic Quarterly, preprint available on 
arXiv:0810.5587 (2008).

\smallskip

\bibitem[Li5]{dec} P. Lipparini,  \emph{Transfer of ultrafilter decomposability}, 
in preparation.


 \smallskip

\bibitem
[Sa]
{Sa} Saks, Victor, \emph{Ultrafilter invariants in
   topological spaces}, Trans. Amer. Math. Soc. \textbf{241} (1978), 79--97.


\smallskip 


\bibitem[ScSt]{SS}  C. T.  Scarborough, A. H: Stone, \emph{Products of     
   nearly compact spaces}, Trans. Amer. Math. Soc. {\bf 124}  (1966),  131--147.  

\smallskip 


\bibitem[St]{St} R. M. Stephenson Jr, \emph{Pseudocompact spaces}, ch. d-07 in 
\emph{Encyclopedia of general topology}, Edited by
   K. P. Hart, J. Nagata and J. E. Vaughan. Elsevier
   Science Publishers, B.V., Amsterdam, 2004.

\smallskip 


\bibitem[Va1]{VLNM} J. E. Vaughan, {\em Some recent results in the theory of  [a,b]-compactness},
in {\em TOPO 72---General Topology and its Applications} 
(Proc. Second Pittsburg Internat. Conf., Carnegie-Mellon Univ. and Univ. Pittsburg, 1972), Lecture Notes in Mathematics 
 \textbf{378} (1974), 534--550. 

\smallskip 


 \bibitem[Va2]{Vfund} J. E. Vaughan, {\em Some properties
 related to [a,b]-compactness}, Fund. Math. \textbf{87} (1975), 251--260. 

\end{thebibliography}

\end{document}